\documentclass[11pt]{article}

\topmargin  = 0 in \oddsidemargin = 0.25 in
\setlength{\textheight}{8.6 in} \setlength{\textwidth}{6 in}
\setlength{\unitlength}{1.0 mm}

\usepackage{amsmath}
\usepackage{amsfonts}
\usepackage{color}
\usepackage{amssymb}
\usepackage{hyperref}
\usepackage{graphicx}
\usepackage{enumerate}
\usepackage[all]{xy}

 \allowdisplaybreaks

\begin{document}

\newtheorem{problem}{Problem}

\newtheorem{theorem}{Theorem}[section]
\newtheorem{corollary}[theorem]{Corollary}
\newtheorem{definition}[theorem]{Definition}
\newtheorem{conjecture}[theorem]{Conjecture}
\newtheorem{question}[theorem]{Question}
\newtheorem{lemma}[theorem]{Lemma}
\newtheorem{proposition}[theorem]{Proposition}
\newtheorem{quest}[theorem]{Question}
\newtheorem{example}[theorem]{Example}
\newenvironment{proof}{\noindent {\bf
Proof.}}{\rule{2mm}{2mm}\par\medskip}
\newenvironment{proofof}{\noindent {\bf
Proof of the Theorem 6.1.}}{\rule{2mm}{2mm}\par\medskip}
\newcommand{\remark}{\medskip\par\noindent {\bf Remark.~~}}
\newcommand{\pp}{{\it p.}}
\newcommand{\de}{\em}

\title{  {A tight $Q$-index condition for a graph to
be $k$-path-coverable involving minimum degree}\thanks{This research was supported by  NSFC (Nos. 11871479, 12071484), Hunan Provincial Natural Science Foundation (2020JJ4675, 2018JJ2479).
 E-mail addresses: taocheng@sdnu.edu.cn(Tao Cheng), fenglh@163.com(L. Feng), ytli0921@hnu.edu.cn(Y. Li), wjliu6210@126.com(W. Liu).}}

\date{April 26, 2020}

\author{Tao Cheng$^a$, Lihua Feng$^b$,  Yongtao Li$^c$, Weijun Liu$^{b}$\\
{\small $^a$ School of Mathematics and Statistics, Shandong Normal University } \\
{\small  Jinan, Shandong, 250014, P.R. China. } \\
{\small $^b$School of Mathematics and Statistics, Central South University} \\
{\small New Campus, Changsha, Hunan, 410083, P.R. China. } \\ 
{\small ${}^c$School of Mathematics, Hunan University} \\
{\small Changsha, Hunan, 410082, P.R. China } \\
 }

\maketitle

\vspace{-0.5cm}

\begin{abstract}
A graph $G$ is $k$-path-coverable if its vertex set $V(G)$ can be covered by $k$ or fewer vertex disjoint paths.  In this paper, using the $Q$-index of a connected graph $G$, we present a tight sufficient  condition for $G$  with fixed minimum degree and  large order to be    $k$-path-coverable.
 \end{abstract}

{{\bf Key words:}  $Q$-index; minimum degree; $k$-path-coverable. }

\section{Introduction}
 We only consider  simple connected graphs throughout  paper.  The notations we used are standard from \cite{BondyBook}. Let $G$ be a simple connected  graph with vertex set $V(G)$ and edge set $E(G)$ such that $|V(G)|=n$ and $|E(G)|=e(G)$. We use $d(v)$ to denote the degree of a vertex $v$ in $G$, and  the minimum degree is   $\delta(G)=\delta$. For two vertex-disjoint graphs $G$ and $H,$   $G\cup H$ is denoted to be the disjoint union of $G$ and $H,$ $G\vee H$ the join of  $G$ and $H,$ which is obtained from $G\cup H$ by adding all possible edges between $G$ and $H.$
We use the symbol $i\sim j$ to denote the vertices $i$ and $j$ are adjacent, and $i\nsim j$ otherwise.

 The \emph{adjacency matrix} of $G$ is $A(G)=(a_{ij})_{n \times n}$ with $a_{ij}=1$ if  $i$ and $j$ are adjacent, and $a_{ij}=0$ otherwise.  The largest eigenvalue of $A(G)$, denoted by $\lambda (G)$, is called the {\it spectral radius} of $G$. The \emph{diagonal matrix} of $G$ is $D(G)=(d_{ii})_{n \times n}$ with diagonal entry  $d_{ii}=d(i)$. The \emph{signless Laplacian matrix} $Q(G)$ of $G$ is   $D(G)+A(G).$ The largest eigenvalue of $Q(G)$, denoted by $q (G)$, is called the  $Q$-\textit{index} (or the signless Laplacian  spectral radius) of $G$.

For  people that working on spectral graph theory,   one of the most well-known problems  is the
Brualdi-Solheid problem \cite{brualdi}:  Given a set ${\cal
{G}}$ of graphs, find a tight upper bound for the spectral radius in
${\cal
{G}}$ and characterize the extremal graphs.
This problem is well studied in the
literature for many classes of graphs, such as  graphs with given diameter \cite{Hansen}, edge chromatic number \cite{FengLAA16}, domination number
\cite{Stevanovic}.
   For the  $Q$-index counterpart of the above problem, Zhang \cite{Zhangxiaodong} gave  the  $Q$-index of graphs with given degree sequence, Zhou \cite{Zhoubo} studied the $Q$-index and Hamiltonicity.  Also, from both theoretical and practical viewpoint,
the  eigenvalues of graphs have been successfully used in many   other disciplines,  one may
refer to \cite{Huobofeng, LiShiEnergy, Lixueliang13, ZhangMinjieDAM, ZhangMinjieAMC}.

Analogous to the Brualdi--Solheid problem, the following    problem regarding the adjacency  spectral radius
was proposed in \cite{NikiforovLAA10}:
What is the maximum spectral radius of a graph $G$ on $n$
vertices without a subgraph isomorphic to a given graph $F$?
 For this problem,
 Fiedler and Nikiforov  \cite{FiedlerNikif} obtained tight sufficient conditions for
graphs to be hamiltonian or traceable. This motivates   further study for such question,
 see   \cite{FengDAM17, FengMonoshMath, FiedlerNikif,   ZhouWangligongAMC18, Liyawei, LiuDMGT, Lumei, Zhoubo,  Liuruifang, ZhouWangligong, Ningbo15}.

When the minimum degree is involved in contrast with the results in  \cite{FiedlerNikif}, using the adjacency spectral radius, Li and Ning \cite{LiBinlongNingbo}   obtained
\begin{theorem}\label{LiBinlongNingbo}\cite{LiBinlongNingbo}
Let $t \geq 1$, and $G$ be a graph of order $n$ with minimum degree $\delta(G) \geq t$.
 If $n \geq \max\{6t+10, (t^2+7t+8)/2\}$ and
$$\lambda(G) \geq \lambda(K_t \vee (K_{n-2t-1} \cup (t+1)K_1)),$$
then $G$ is traceable, unless $G=K_t \vee (K_{n-2t-1} \cup (t+1)K_1)$.
\end{theorem}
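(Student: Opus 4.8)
The plan is to argue by contradiction, combining an Erd\H{o}s-type edge-maximality theorem for non-traceable graphs with a Hong--Nikiforov-type spectral estimate. Throughout write $H := K_t \vee (K_{n-2t-1} \cup (t+1)K_1)$ and $n_0 := \max\{6t+10,(t^2+7t+8)/2\}$; note that $e(H) = \binom{n-t-1}{2} + t(t+1)$ and $\delta(H) = t$. The only fact I will need about $\lambda(H)$ is the crude bound $\lambda(H) > n-t-2$: indeed $K_t \vee K_{n-2t-1} = K_{n-t-1}$ is a proper (induced) subgraph of the connected graph $H$, so $\lambda(H) > \lambda(K_{n-t-1}) = n-t-2$. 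Now assume $G$ has order $n \ge n_0$, $\delta(G) \ge t$, $\lambda(G) \ge \lambda(H)$, $G$ is not traceable, and $G \ne H$; I aim for a contradiction.

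The combinatorial engine is the following structural dichotomy, which I would either cite or derive via the equivalence ``$G$ is traceable $\iff G \vee K_1$ is Hamiltonian'' (applying Erd\H{o}s's 1962 theorem, and its stability refinement, to the non-Hamiltonian graph $G \vee K_1$ of order $n+1$ and minimum degree $\ge t+1$): \emph{for $n \ge n_0$, every non-traceable graph of order $n$ with minimum degree at least $t$ is either a spanning subgraph of $H$, or else has at most $e(H) - (n-2t-2)$ edges, and $H$ is the unique non-traceable graph with minimum degree $\ge t$ attaining $e(H)$ edges}. The role of the hypothesis $n \ge n_0$ is precisely to make the configuration $H$ win over the ``balanced'' non-traceable construction and to force uniqueness. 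Granting this, the proof splits into two cases.

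If $G$ is a spanning subgraph of $H$, then since $G \ne H$ it is a proper spanning subgraph; as $G$ is connected and the spectral radius strictly increases upon adding an edge, $\lambda(G) < \lambda(H)$, contradicting $\lambda(G) \ge \lambda(H)$. If instead $e(G) \le e(H) - (n-2t-2)$, I would apply the bound
\[
\lambda(G) \le \frac{\delta(G)-1}{2} + \sqrt{2e(G) - n\,\delta(G) + \frac{(\delta(G)+1)^2}{4}}\,.
\]
Since $G \ne K_n$ the right-hand side is non-increasing in $\delta(G)$ and increasing in $e(G)$, so using $\delta(G) \ge t$ together with the edge bound,
\[
\lambda(G) \le \frac{t-1}{2} + \sqrt{2\bigl(e(H) - (n-2t-2)\bigr) - nt + \frac{(t+1)^2}{4}}\,.
\]
A short computation reduces the inequality ``this quantity is $< n-t-2$'' to ``$(t+1)(t+4) < 2n$'', which holds for all $n \ge n_0$ and all $t \ge 1$; hence $\lambda(G) < n-t-2 < \lambda(H)$, again a contradiction. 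It follows that $G$ is traceable unless $G = H$.

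I expect the combinatorial dichotomy to be the main obstacle: one must control \emph{all} non-traceable graphs with $\delta \ge t$ that are not spanning subgraphs of $H$ yet carry more than $e(H) - (n-2t-2)$ edges (a stability-type statement), and one must verify that $n_0 = \max\{6t+10,(t^2+7t+8)/2\}$ is exactly the threshold from which both the uniqueness of the extremal graph and the spectral comparison above hold for every $t \ge 1$. The spectral side is comparatively routine once a sufficiently sharp degree-aware bound for $\lambda(G)$ is available --- the weaker estimate $\lambda(G) \le \sqrt{2e(G)-n+1}$ would fail to separate $G$ from $H$ in the near-extremal regime, so the refinement involving $\delta(G)$ is essential.
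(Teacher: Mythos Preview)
The paper does not contain a proof of this statement: Theorem~\ref{LiBinlongNingbo} is quoted from \cite{LiBinlongNingbo} as motivating background, not established here, so there is no ``paper's own proof'' to compare against. The paper's original contributions concern the $Q$-index analogue (Theorem~\ref{thm14}), proved in Section~3 via the closure/stability Lemma~\ref{lem31} and the quadratic-form comparisons of Lemmas~\ref{lem32}--\ref{lem37}.

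That said, your sketch is essentially the strategy of the original Li--Ning argument: pair a stability version of the edge-extremal theorem for non-traceable graphs of given minimum degree with the Hong--Shu--Fang/Nikiforov bound $\lambda(G)\le \frac{\delta-1}{2}+\sqrt{2e(G)-n\delta+\frac{(\delta+1)^2}{4}}$. Your spectral calculation is correct (the reduction to $2n>(t+1)(t+4)$ is exactly right and is guaranteed by $n\ge (t^2+7t+8)/2$), and the observation $\lambda(H)>n-t-2$ via $K_{n-t-1}\subset H$ is the standard trick. One caution: the precise edge gap you use, $e(H)-(n-2t-2)$, needs to be the actual second-extremal threshold from the stability lemma you invoke; be sure that your formulation of the dichotomy matches what is proved in \cite{LiBinlongNingbo} (and that it uses connectedness, as the paper's standing assumption does, to rule out $K_{t+1}\cup K_{n-t-1}$). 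Apart from pinning down that citation, the plan is sound.
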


Theorem \ref{LiBinlongNingbo} is   generalized by Nikiforov  as
\begin{theorem}\cite{Nikiforov} \label{Nikiforov}
Let $t \geq 1$  and $G$ be a graph of order $n$.
If $n \geq t^3 +t^2+2t + 5$, $\delta(G) \geq t$
and
 $$
\lambda(G) \geq n-t-2,
$$ then $G$ is traceable unless $G = K_{t+1}\cup K_{n-t-1} $ or $G = K_t \vee (K_{n-2t-1} \cup (t+1)K_1)$.
\end{theorem}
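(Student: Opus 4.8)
The plan is to assume $G$ is not traceable and show it must be one of the two listed graphs, treating the disconnected and connected cases separately. If $G$ is disconnected, let $G_{1}$ be a component with $\lambda(G_{1})=\lambda(G)$. Since $\lambda(G_{1})\le|V(G_{1})|-1$, the hypothesis $\lambda(G)\ge n-t-2$ gives $|V(G_{1})|\ge n-t-1$, so the remaining components contain at most $t+1$ vertices in total; as $\delta(G)\ge t$ every component has at least $t+1$ vertices, so there is exactly one further component, it has exactly $t+1$ vertices, and $|V(G_{1})|=n-t-1$. The small component has minimum degree $\ge t=(t+1)-1$, hence equals $K_{t+1}$, while $\lambda(G_{1})\ge n-t-2=|V(G_{1})|-1$ forces $G_{1}=K_{n-t-1}$. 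Thus $G=K_{t+1}\cup K_{n-t-1}$.

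Now assume $G$ is connected and set $G^{*}=K_{t}\vee(K_{n-2t-1}\cup(t+1)K_{1})$, so that $e(G^{*})=\binom{n-t-1}{2}+t(t+1)$. First I would convert the spectral hypothesis into an edge bound using a Hong-type inequality that incorporates the minimum degree: for a connected $G$ with $\delta(G)\ge t$,
$$\lambda(G)\ \le\ \frac{t-1}{2}+\sqrt{\,2e(G)-tn+\tfrac{(t+1)^{2}}{4}\,},$$
which combined with $\lambda(G)\ge n-t-2$ gives $e(G)\ge\binom{n-t-1}{2}+\binom{t+1}{2}=e(G^{*})-\binom{t+1}{2}$. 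Next I would use the equivalence ``$G$ traceable $\iff$ $G\vee K_{1}$ Hamiltonian'' and apply Erd\H{o}s's theorem on the maximum size of a non-Hamiltonian graph with given minimum degree to $H=G\vee K_{1}$ (which has $n+1$ vertices and minimum degree $\ge t+1$), together with its stability version. The condition $n\ge t^{3}+t^{2}+2t+5$ guarantees that only the dense extremal family is relevant; this yields $e(G)\le e(G^{*})$, with equality precisely for $G=G^{*}$, and in the near-extremal window $e(G^{*})-\binom{t+1}{2}\le e(G)<e(G^{*})$ it forces $G$ to be a spanning subgraph of $G^{*}$ obtained by deleting at most $\binom{t+1}{2}$ edges, all from the clique $K_{n-t-1}$ (the $t+1$ independent vertices of $G^{*}$ already have degree exactly $t$, so no edge incident to them can be removed). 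The same structural conclusion can be reached via the Bondy--Chv\'atal closure of $G\vee K_{1}$.

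It remains to show that no such $G\ne G^{*}$ occurs, i.e. that $G=G^{*}-F$ with $1\le|F|\le\binom{t+1}{2}$ is impossible. The point is that after deleting $F$ the graph still has only a bounded number of ``exceptional'' vertices, namely the at most $t(t+1)=O(t^{2})$ endpoints of $F$; taking these as singleton cells together with the three cells given by the untouched vertices of the $K_{t}$, the untouched vertices of $K_{n-2t-1}$, and the $t+1$ independent vertices, one obtains an equitable partition of $G$, so that $\lambda(G)$ equals the Perron eigenvalue of an explicit $O(t^{2})\times O(t^{2})$ quotient matrix. One then estimates this eigenvalue and checks that it is strictly less than $n-t-2$ (for the boundary case $|F|=\binom{t+1}{2}$ one may instead feed $e(G)$ back into the Hong-type inequality, which already gives $\lambda(G)\le n-t-2$, and exclude equality by the degree-homogeneity it would impose, whereas $G$ has three distinct degrees). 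This is where the bound on $n$ is essential: $\lambda(G^{*})$ exceeds $n-t-2$ only by $O(1/n^{2})$, whereas deleting a single clique edge from $G^{*}$ lowers the spectral radius by $\Omega(1/n)$. Together with the disconnected case, this gives $G\in\{K_{t+1}\cup K_{n-t-1},\ G^{*}\}$ and finishes the proof.

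The hard part will be the last two steps in combination: the Hong-type lower bound on $e(G)$ and the Erd\H{o}s-type upper bound overlap by $\binom{t+1}{2}$ edges, so counting edges cannot by itself settle the matter, and one genuinely has to run the stability argument for Erd\H{o}s's theorem with explicit constants and then control, uniformly in $n$, the quotient matrices of every spanning subgraph of $G^{*}$ that remains in play. That bookkeeping, rather than any single clever idea, is the real content of the argument and is what pins down the threshold $n\ge t^{3}+t^{2}+2t+5$.
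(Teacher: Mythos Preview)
The paper does not contain a proof of this statement: Theorem~\ref{Nikiforov} is quoted from \cite{Nikiforov} purely as background (it generalizes Theorem~\ref{LiBinlongNingbo} and motivates the $Q$-index analogue in Theorem~\ref{thm14}), and no argument for it appears anywhere in the paper. So there is nothing here to compare your proposal against.

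For what it is worth, your outline is broadly in the spirit of Nikiforov's original proof in \cite{Nikiforov}: the disconnected case is handled exactly as you say, and in the connected case one combines the Hong--Nikiforov inequality $\lambda(G)\le \tfrac{t-1}{2}+\sqrt{2e(G)-tn+\tfrac{(t+1)^2}{4}}$ with a stability form of the Erd\H{o}s bound to force $G\subseteq K_t\vee(K_{n-2t-1}\cup (t+1)K_1)$, and then rules out proper subgraphs by a spectral computation. Your description of the final step via an equitable partition is not quite how Nikiforov does it (he estimates the Perron vector directly rather than building an $O(t^2)\times O(t^2)$ quotient), but the idea is the same and the bookkeeping you anticipate is indeed where the threshold $n\ge t^3+t^2+2t+5$ comes from. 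If you want to see the details, consult \cite{Nikiforov}; they are not reproduced in the present paper.
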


In this paper, we  consider the $k$-path-coverable problem.
A graph
$G$ is {\it $k$-path-coverable} if $V(G)$ can be covered by $k$ or fewer vertex-disjoint paths. In particular, 1-path-coverable is the same as  traceable.
The disjoint path cover problem is strongly related to the well-known hamiltonian problem (one may refer to \cite{Lihao} for a survey), which is among the most fundamental
ones in graph theory, and   attracts much attention in theoretical computer science. However, this problem is NP-complete \cite{Steiner}, therefore, finding their guaranteed sufficient conditions becomes an interesting work. In \cite{LiJianping}, such Ore-type condition is obtained.

%

For convenience,  we denote
$$
{B}(n,k,\delta):=K_{\delta}\vee ( K_{n-2\delta -k} \cup \overline{K_{\delta +k}}).
$$
In \cite{LiuDMGT}, by generalizing the results in Theorem  \ref{LiBinlongNingbo} and Theorem \ref{Nikiforov}, Liu et. al. obtained the following sufficient
by using the adjacency spectral radius.
 \begin{theorem} \label{thm13}\cite{LiuDMGT}
Let $k\ge 1$ and $\delta \ge 2$. If $G$ is a connected graph on $n \ge
\max \{\delta^2(\delta +k)+\delta +k+5, 5k+6\delta +6\}$ vertices and minimum degree
$\delta (G)\ge \delta $ such that
\[ \lambda (G) \ge n-\delta -k -1, \]
then $G$ is $k$-path-coverable unless $G=B(n,k,\delta )$.
\end{theorem}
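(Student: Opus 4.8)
The plan is to argue by contradiction, combining a Chv\'atal--Bondy type structural description of graphs that are not $k$-path-coverable with a minimum-degree spectral upper bound, and then to dispose of the near-extremal case by a Rayleigh-quotient perturbation estimate. So suppose $G$ is connected, $\delta(G)\ge\delta$, $n$ is at least the stated bound, $\lambda(G)\ge n-\delta-k-1$, yet $G$ is neither $k$-path-coverable nor equal to $B(n,k,\delta)$. First I would pass to the $(n-k)$-closure $C=C_{n-k}(G)$, obtained by repeatedly joining nonadjacent vertices whose degree sum is at least $n-k$. Since $k$-path-coverability is stable under this operation (this is the Bondy--Chv\'atal closure adapted to $k$-path-coverability, reflecting that $G\vee\overline{K_{k-1}}$ is traceable iff $G$ is $k$-path-coverable; for $k=1$ it is the traceability closure), $C$ is still not $k$-path-coverable; moreover $C$ is connected, $\delta(C)\ge\delta$, $\lambda(C)\ge\lambda(G)\ge n-\delta-k-1$, and $C\ne K_n$ because $K_n$ is $k$-path-coverable. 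Because $C$ is not $k$-path-coverable, its degree sequence $d_1\le\cdots\le d_n$ violates the Ore/Chv\'atal condition for $k$-path-coverability (namely: $d_{j+k}\ge j+1$ or $d_{n-j}\ge n-j-k$ for every $j$ with $1\le j\le\lfloor(n-k)/2\rfloor$), so there is an index $j$ with $\delta\le j\le\lfloor(n-k)/2\rfloor$ such that $C$ has at least $j+k$ vertices of degree at most $j$ and at least $n-j$ vertices of degree at most $n-j-k-1$; using in addition that $C$ is closed, this sharpens to the statement that $C$ is a spanning subgraph of $B(n,k,j)$.

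Next I would pin down $j=\delta$. Summing degrees in the above structure gives $e(C)\le e(B(n,k,j))$, while a Hong-type bound for connected graphs of minimum degree at least $\delta$, namely $\lambda\le\frac{\delta-1}{2}+\sqrt{2m-\delta(n-1)+\frac{(\delta-1)^2}{4}}$, together with $\lambda(C)\ge n-\delta-k-1$, yields $e(C)\ge e(B(n,k,\delta))-\frac{\delta(\delta+k)}{2}$. On the other hand, a direct computation with $e(B(n,k,j))=\binom{j}{2}+\binom{n-2j-k}{2}+j(n-j)$ shows that this quantity is decreasing in $j$ near $j=\delta$, is maximized on $[\delta,\lfloor(n-k)/2\rfloor]$ only at $j=\delta$, and satisfies $e(B(n,k,\delta))-e(B(n,k,\delta+1))=n-3\delta-2k-2$; since $n\ge\delta^2(\delta+k)+\delta+k+5$, this last gap exceeds $\frac{\delta(\delta+k)}{2}$, and hence $e(B(n,k,j))<e(B(n,k,\delta))-\frac{\delta(\delta+k)}{2}$ for every $j\ge\delta+1$. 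Comparing with the lower bound on $e(C)$ forces $j=\delta$, so $C$ is a spanning subgraph of $B(n,k,\delta)$. Since the $\delta+k$ vertices in the independent part of $B(n,k,\delta)$ have degree exactly $\delta$ there and $\delta(C)\ge\delta$, all edges incident to those vertices survive in $C$; therefore any edge of $B(n,k,\delta)$ that is missing from $C$ must lie inside the clique part $K_{n-\delta-k}$ of $B(n,k,\delta)$.

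To finish, I would show that no such edge can be missing. Using the equitable partition of $B(n,k,\delta)$ into $V(K_\delta)$, $V(K_{n-2\delta-k})$, $V(\overline{K_{\delta+k}})$ one computes that $\lambda(B(n,k,\delta))=n-\delta-k-1+\eta$ with $\eta=\Theta\!\big(\delta^2(\delta+k)/n^2\big)>0$, and that the Perron eigenvector is, after normalization, of order $n^{-1/2}$ on the clique part and of order $n^{-3/2}$ on the independent part. For an edge $e=uv$ inside the clique part, $u$ and $v$ still have degree of order $n$ in $B(n,k,\delta)-e$, so the normalized Perron eigenvector $y$ of $B(n,k,\delta)-e$ satisfies $y_uy_v=\Omega(1/n)$; then $\lambda(B(n,k,\delta))\ge y^{\top}A(B(n,k,\delta))y=\lambda(B(n,k,\delta)-e)+2y_uy_v$ gives $\lambda(B(n,k,\delta)-e)\le\lambda(B(n,k,\delta))-\Omega(1/n)$. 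As $n\ge\delta^2(\delta+k)+\delta+k+5$ makes the $\Omega(1/n)$ drop outweigh the surplus $\eta=\Theta(\delta^2(\delta+k)/n^2)$, we get $\lambda(B(n,k,\delta)-e)<n-\delta-k-1$. Consequently, if $C\ne B(n,k,\delta)$ then, picking a missing edge $e$ inside the clique part, $C\subseteq B(n,k,\delta)-e$ and $\lambda(C)\le\lambda(B(n,k,\delta)-e)<n-\delta-k-1$, contradicting $\lambda(C)\ge n-\delta-k-1$. Hence $C=B(n,k,\delta)$, so $G\subseteq B(n,k,\delta)$; running the same missing-edge argument for $G$ in place of $C$ yields $G=B(n,k,\delta)$ or $\lambda(G)<n-\delta-k-1$, in either case a contradiction. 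This completes the proof.

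I expect the main obstacle to be the structural step -- promoting the degree-sequence failure of the $k$-path-coverability condition to the assertion that the closed graph $C$ is a \emph{spanning subgraph} of $B(n,k,j)$. The delicate point there is the configuration in which the ``dominating-like'' set of high-degree vertices has fewer than $\delta$ members: edge counting alone does not exclude it, and one must instead produce an explicit cover of $V(C)$ by at most $k$ vertex-disjoint paths, exploiting that $C$ is closed and has very many edges. The second delicate point is quantitative: because $\lambda(B(n,k,\delta))$ exceeds $n-\delta-k-1$ by only $\Theta(1/n^2)$, the final perturbation estimate must show that deleting a single clique-part edge costs a genuine $\Omega(1/n)$, which is precisely what makes a hypothesis of the form ``$n$ large relative to $\delta$ and $k$'' indispensable.
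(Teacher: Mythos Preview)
Note first that Theorem~\ref{thm13} is quoted from \cite{LiuDMGT} and is not proved in this paper, so there is no proof here to compare against directly. The closest analogue in the paper is the proof of Theorem~\ref{thm14} (the $Q$-index version), and its architecture differs from yours: the paper goes spectral bound $\Rightarrow$ edge lower bound (Lemma~\ref{lem22}) $\Rightarrow$ structural stability (Lemma~\ref{lem31}) $\Rightarrow$ Rayleigh-quotient refinement (Lemmas~\ref{lem32}--\ref{lem37}). In particular, the structural step (Lemma~\ref{lem31}) is obtained not via a Chv\'atal-type degree-sequence condition for $k$-path-coverability but by analysing the clique number of the $(n-k)$-closure and then exhibiting explicit path covers when the number of ``frontier'' vertices is below $\delta$.

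Your outline has a genuine gap precisely where you flag it. From ``$C$ is $(n-k)$-closed and violates the Chv\'atal condition at index $j$'' you jump to ``$C$ is a spanning subgraph of $B(n,k,j)$''. The degree-sequence failure only gives you $j+k$ vertices of degree at most $j$ and $n-j$ vertices of degree at most $n-j-k-1$; closure tells you that two nonadjacent vertices have degree sum below $n-k$. Neither of these, nor their combination, forces the low-degree vertices to form an independent set with all of their neighbours concentrated in a fixed $j$-set, which is what $C\subseteq B(n,k,j)$ asserts. The paper's Lemma~\ref{lem31} shows exactly how delicate this is: after proving $\omega(C)=n-\delta-k$, it still has to argue separately, by building explicit path covers, that the number $s$ of clique vertices with neighbours in $H=C-C[\text{clique}]$ must equal $\delta$ (ruling out $1\le s\le\delta-1$), and only then does $H$ become independent. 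Your degree-sequence route would need an equivalent amount of combinatorial work at this point, and that work is absent from the proposal.

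A second, smaller issue: your endgame is asymptotic ($\eta=\Theta(\delta^2(\delta+k)/n^2)$, drop $=\Omega(1/n)$), whereas the theorem has an explicit threshold $n\ge\max\{\delta^2(\delta+k)+\delta+k+5,\,5k+6\delta+6\}$. Turning your $\Theta/\Omega$ sketch into an inequality that kicks in exactly at that threshold (rather than merely for ``$n$ sufficiently large'') is nontrivial and would need the kind of explicit eigenvector estimates the paper carries out in Lemmas~\ref{prop34}--\ref{prop36} for the $Q$-index case.
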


In this paper, we will consider the $Q$-index version  of Theorem \ref{thm13}.
For the graph ${B}(n,k,\delta)$,
 let
$$X=\{v \in V({B}(n,k,\delta)):d(v)=k\}, \qquad
Y=\{v \in V({B}(n,k,\delta)):d(v)=n-1 \},$$
$$Z=\{v \in V({B}(n,k,\delta)):d(v)=n-k-\delta -1 \}.$$
We denote by $E_1$ the edge set of $E({R}(n,k,\delta))$ whose endpoints are both from $Y\cup Z.$ We define
\begin{gather*}
 \mathcal{B}_1(n,k,\delta)=\left\{G\subseteq {R}(n,k,\delta)-E',
\mbox{where $ E'\subset E_1$ with $|E'|\leq
\left\lfloor \frac{ (\delta +k)\delta }{4}\right\rfloor$}\right\},\\
\mathcal{B}_2(n,k,\delta)=\left\{G\subseteq {R}(n,k,\delta)-E',
\mbox{where $ E'\subset E_1$ with $|E'|=
\left\lfloor \frac{ (\delta +k)\delta}{4}\right\rfloor+1 $}\right\}.
\end{gather*}
Here, the symbol $G\subseteq {R}(n,k,\delta)-E'$ means $G$ is a subgraph of ${R}(n,k,\delta)-E'$.

The main result of this paper is
\begin{theorem} \label{thm14}
Let $k\ge 1$ and $\delta \ge 2$. If $G$ is a connected graph on $n \ge
(\delta^2 +k\delta +7\delta +6k +9)(\delta^2 +k\delta +1)$ vertices and minimum degree
$\delta (G)\ge \delta $ such that
\[ q(G) \ge 2(n-\delta -k -1), \]
then $G$ is $k$-path-coverable unless $G\in \mathcal{B}_1(n,k,\delta )$.
\end{theorem}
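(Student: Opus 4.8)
The plan is to argue by contradiction: assume $G$ is connected with $\delta(G)\ge\delta$ and $q(G)\ge 2(n-\delta-k-1)$, assume $G$ is not $k$-path-coverable, and assume $G\notin\mathcal B_1(n,k,\delta)$, and derive a contradiction. The argument splits into a combinatorial half and a spectral half, bridged by the elementary inequality $q(G)\le \frac{2e(G)}{n-1}+n-2$, valid for every connected graph on $n$ vertices. Substituting $q(G)\ge 2(n-\delta-k-1)$ gives $e(G)\ge\frac12(n-1)(n-2\delta-2k)$, which — once $n$ is as large as the hypothesis demands — places $e(G)$ within $O\big((\delta+k)\delta\big)$ of $e\big(B(n,k,\delta)\big)$.

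\textbf{Combinatorial step.} The heart is a stability statement: \emph{a connected graph $H$ on $n$ (large) vertices with $\delta(H)\ge\delta$ that is not $k$-path-coverable and satisfies $e(H)\ge\frac12(n-1)(n-2\delta-2k)$ must lie in $\mathcal B_1(n,k,\delta)\cup\mathcal B_2(n,k,\delta)$} — equivalently, given the degree bound (which forbids removing any edge at a low-degree vertex), $H$ is a subgraph of $R(n,k,\delta)$. I would obtain this via the standard equivalence that $H$ is $k$-path-coverable if and only if $H\vee\overline{K_k}$ is hamiltonian, which turns the problem into a hamiltonicity-stability problem on $n+k$ vertices with minimum degree $\ge\delta+k$; to this one applies an Erd\H{o}s--Gallai/Kopylov-type argument together with its stability counterpart (the machinery behind Theorem~\ref{LiBinlongNingbo} and Theorem~\ref{thm13}). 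Concretely, a connected non-$k$-path-coverable $H$ has a set $S$ with $c(H-S)\ge|S|+k+1$, where $c(\cdot)$ counts components; choosing $S$ of least size and invoking the edge bound forces $|S|=\delta$, forces $H-S$ to consist of $\delta+k$ isolated vertices together with one nearly complete graph on $n-2\delta-k$ vertices, and forces $S$ plus that big component to be nearly complete — that is, $H$ is $B(n,k,\delta)$ with only a bounded number of edges of $E_1$ (the edges avoiding the $\delta+k$ low-degree vertices) missing, whence $H\in\mathcal B_1\cup\mathcal B_2$. The rival extremal configurations — "balanced" graphs of type $K_{\lceil(n+k)/2\rceil}\vee\overline{K_{\lfloor(n-k)/2\rfloor}}$, and bridge graphs formed by joining $K_{\delta+1}$ to $K_{n-\delta-1}$ by a single edge — are ruled out because for $n$ of the stated magnitude their edge counts fall below $\frac12(n-1)(n-2\delta-2k)$. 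Applying this with $H=G$ and using $G\notin\mathcal B_1$ leaves $G\in\mathcal B_2(n,k,\delta)$: $G$ is a subgraph of $R(n,k,\delta)-E'$ for some $E'\subset E_1$ with $|E'|=\lfloor(\delta+k)\delta/4\rfloor+1$.

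\textbf{Spectral step.} It remains to contradict $q(G)\ge 2(n-\delta-k-1)$. Since $G$ is a subgraph of the connected graph $R(n,k,\delta)-E'$ and passing to a proper subgraph strictly lowers the $Q$-index, it suffices to show $q\big(R(n,k,\delta)-E'\big)<2(n-\delta-k-1)$ for every admissible $E'$. For this I would take the Perron vector $x$ of $R(n,k,\delta)$, normalised by $x^{\top}x=1$; by symmetry $x$ is constant on each of $X$, $Y$, $Z$ with values $a,b,c$, and a direct estimate gives $a=o(b)=o(c)$ and $b,c=(1+o(1))n^{-1/2}$. For each edge $uv\in E_1$ one has $q(R-uv)\le x^{\top}Q(R-uv)x=q(R)-2x_ux_v\le q(R)-2\min\{b,c\}^2$, so deleting the $\lfloor(\delta+k)\delta/4\rfloor+1$ edges of $E'$ yields $q(R-E')\le q(R)-2\big(\lfloor(\delta+k)\delta/4\rfloor+1\big)\min\{b,c\}^2$; combined with the matching bound $q\big(R(n,k,\delta)\big)<2(n-\delta-k-1)+2\big(\lfloor(\delta+k)\delta/4\rfloor+1\big)\min\{b,c\}^2$ this gives $q(R-E')<2(n-\delta-k-1)$, the desired contradiction. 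This is precisely why $\lfloor(\delta+k)\delta/4\rfloor$ appears in the definitions of $\mathcal B_1$ and $\mathcal B_2$: it is the largest number of $E_1$-edges one may delete from $R(n,k,\delta)$ while retaining $q\ge 2(n-\delta-k-1)$. Since only $O\big((\delta+k)\delta\big)$ of the $\Theta(n^2)$ edges are removed, the change in the Perron vector along the way is lower order and does not disturb the estimate.

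\textbf{Main obstacle.} The spectral step, while it requires attentive bookkeeping — the exact shape of the Perron vector of $R(n,k,\delta)$ and the sharp two-sided estimate on $q\big(R(n,k,\delta)\big)$ with the constant $\lfloor(\delta+k)\delta/4\rfloor$ — is in essence a finite computation. The genuine difficulty is the combinatorial stability step: showing that \emph{every} connected non-$k$-path-coverable graph with $\delta(G)\ge\delta$ and $e(G)\ge\frac12(n-1)(n-2\delta-2k)$ is pinned down to $B(n,k,\delta)$ with only a bounded set of $E_1$-edges missing. This forces a delicate case analysis of the extremal separating set $S$ — its cardinality, the component structure of $G-S$, and the edge density between $S$ and the giant component — repeatedly using the edge lower bound (and the hamiltonicity-stability input on $G\vee\overline{K_k}$) to eliminate every alternative configuration.
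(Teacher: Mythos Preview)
Your global plan matches the paper's: use $q(G)\le\frac{2e(G)}{n-1}+n-2$ to force $e(G)$ large, run a combinatorial stability argument to pin $G$ inside $B(n,k,\delta)$, then show spectrally that once $\lfloor\delta(\delta+k)/4\rfloor+1$ edges of $E_1$ are missing one has $q<2(n-\delta-k-1)$. The spectral step, however, contains a real error. You take the Perron vector $x$ of $R=R(n,k,\delta)$ and write $q(R-uv)\le x^{\top}Q(R-uv)x$. Rayleigh goes the other way: since $q(\cdot)$ is a \emph{maximum} of Rayleigh quotients, any fixed unit vector gives a lower bound, so in fact $q(R-uv)\ge x^{\top}Q(R-uv)x$; your chain therefore produces a lower bound on $q(R-E')$, not the upper bound you need. (Also, for the signless Laplacian $x^{\top}Q(R-uv)x=x^{\top}Q(R)x-(x_u+x_v)^2$, not $-2x_ux_v$; the latter is the adjacency computation.) The paper repairs this by reversing the roles: it takes the Perron vector $h$ of $G$ itself and compares $\langle Q(G)h,h\rangle$ with $\langle Q(K_{n-\delta-k}\cup\overline{K_{\delta+k}})h,h\rangle$; Rayleigh now points the right way for the latter graph. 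The price is that $h$ is no longer constant on $X,Y,Z$, so one must bound $h_i$ for $i\in X$ and the oscillation $\max h-\min_{Y\cup Z}h$ directly from the eigen-equation (the paper's Lemmas~\ref{prop34}--\ref{prop36}); this is where the lower bound on $n$ is actually consumed.

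On the combinatorial side, your route via the equivalence ``$H$ is $k$-path-coverable iff $H\vee\overline{K_k}$ is hamiltonian'' together with a hamiltonicity stability result is a legitimate alternative to what the paper does. But your ``concrete'' implementation asserts that every non-$k$-path-coverable $H$ has a set $S$ with $c(H-S)\ge|S|+k+1$; this is the converse of the easy (necessary) direction and is false in general, so it cannot drive the argument. The paper instead passes to the $(n-k)$-closure $G'=\mathrm{cl}_{n-k}(G)$, shows by edge-counting in three ranges of the clique number that $\omega(G')=n-\delta-k$, and then uses Dirac-type path arguments on the $\delta+k$ vertices outside the big clique to force $G'=B(n,k,\delta)$ exactly, whence $G\subseteq B(n,k,\delta)$ and (by the minimum-degree constraint on the vertices of $X$) $G=B(n,k,\delta)-E'$ with $E'\subseteq E_1$.
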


\section{Preliminary Results}
We first present some basic notations and results.
For $x=(x_{1},x_{2},...,x_{n})^T\neq 0$, according to the Rayleigh's principle, we have
\[ q(G)=\max_{x}\frac{\langle Q(G)x,x \rangle}{\langle x,x \rangle}
=\max_{x}\frac{ x^{T}Q(G)x }{x^{T}x}.\]
By the definition of $Q(G),$ we   have
\[  \langle Q(G)x,x\rangle=\sum_{i\in V(G)}d(i)x_{i}^2+ 2\sum_{i\sim j}x_{i}x_{j}
=\sum_{i\sim j}(x_{i}+x_{j})^2.\]
If ${h}$ is the unit positive eigenvector of $q(G)$,
then
\[ Q(G)h=q(G)h.\]
If we take the $i$-th entry of both sides and rearranging terms, we get
\begin{eqnarray} \label{eq1}
(q(G)-d(i))h_{i}=\sum_{i\sim j}h_{j}.
\end{eqnarray}
Let $N(i)$ denote the neighbor of $i,$ $N[i]=N(i)\cup \{i\}$. From the above,  we have
\begin{lemma}\label{lem21} \cite{Liyawei}
For any $i,j\in V(G),$ we have
\begin{eqnarray} \label{eq2}
   (q(G)-d(i))(h_{i}-h_{j})
=(d(i)-d(j))h_{j}+\sum_{k\in N(i)\setminus N(j)}h_{k}-\sum_{l\in N(j)\setminus N(i)}h_{l}.
\end{eqnarray}
\end{lemma}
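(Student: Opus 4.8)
The plan is to derive the identity (\ref{eq2}) directly from the scalar eigen-equation (\ref{eq1}), which records the $i$-th coordinate of $Q(G)h=q(G)h$. First I would write (\ref{eq1}) at both vertices $i$ and $j$, namely
\[
(q(G)-d(i))h_i=\sum_{k\in N(i)}h_k, \qquad (q(G)-d(j))h_j=\sum_{l\in N(j)}h_l,
\]
where $\sum_{k\in N(i)}h_k$ is just the neighbour sum $\sum_{i\sim k}h_k$ on the right-hand side of (\ref{eq1}). These two scalar relations are the only input; the rest is algebra.

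Next I would expand the left side of (\ref{eq2}) as $(q(G)-d(i))(h_i-h_j)=(q(G)-d(i))h_i-(q(G)-d(i))h_j$ and substitute the first relation for the first term. For the second term the key manoeuvre is to re-express the coefficient so that the equation at $j$ becomes applicable: writing $q(G)-d(i)=(q(G)-d(j))+(d(j)-d(i))$ gives
\[
(q(G)-d(i))h_j=(q(G)-d(j))h_j+(d(j)-d(i))h_j=\sum_{l\in N(j)}h_l+(d(j)-d(i))h_j.
\]
Combining the two substitutions and flipping the sign of the degree term yields
\[
(q(G)-d(i))(h_i-h_j)=(d(i)-d(j))h_j+\sum_{k\in N(i)}h_k-\sum_{l\in N(j)}h_l.
\]

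The final step is purely set-theoretic bookkeeping: decompose $N(i)=(N(i)\cap N(j))\cup(N(i)\setminus N(j))$ and likewise for $N(j)$. The common-neighbour contributions $\sum_{k\in N(i)\cap N(j)}h_k$ appear in both neighbour sums and cancel, leaving exactly $\sum_{k\in N(i)\setminus N(j)}h_k-\sum_{l\in N(j)\setminus N(i)}h_l$, which is precisely (\ref{eq2}). There is essentially no obstacle here; the only point worth a moment's care is that one should \emph{not} assume $i\sim j$. The derivation holds for arbitrary $i,j\in V(G)$, and the terms $h_i,h_j$ — which enter the neighbour sums exactly when $i$ and $j$ are adjacent — are absorbed correctly by the set-difference decomposition, since in a simple graph no vertex is its own neighbour, so that $j$ (when adjacent to $i$) lands in $N(i)\setminus N(j)$ and $i$ in $N(j)\setminus N(i)$.
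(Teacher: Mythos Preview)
Your derivation is correct. The paper itself does not supply a proof of this lemma --- it is quoted from \cite{Liyawei} --- so there is no in-paper argument to compare against; your approach (write (\ref{eq1}) at $i$ and at $j$, subtract after adjusting the coefficient of $h_j$, then cancel the common-neighbour terms) is exactly the standard one and would serve as the omitted proof.
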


We now present some graph notations we will use.
The concept of stability was introduced by Bondy and Chv\'{a}tal \cite{Bondy}.
Let $P$ be a property defined on all graphs of order $n$.
Let $k$ be a non-negative integer. The property $P$ is said to be $k$-stable
if whenever $G+uv$ has property $P$ and
\[ d_G(u)+d_G(v) \ge k, \]
where $un \notin E(G)$, then $G$ itself has property $P$.
Among all the graphs $H$ of order $n$ such that $G\subseteq H$ and
\[ d_H(u)+d_H(v) <k \]
for all $uv\notin E(H)$, there is a unique smallest on with respect to size. We shall call this graph
the $k$-\textit{closure} of G, and denote it by $\mathrm{cl}_k(G)$. Obviously, $\mathrm{cl}_k(G)$
can be obtained from $G$ by recursive procedure which consists of  joining non-adjacent vertices
with degree-sum at least $k$. This concept plays a prominent role in any structual graph theory problems.

\begin{lemma} \cite{FengPIMB09} \label{lem22}
Let $G$ be a graph of order $n.$ Then
  $$ q(G)\leq \frac{2e(G)}{n-1}+n-2.$$
\end{lemma}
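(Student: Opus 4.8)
\textbf{Proof proposal for Lemma \ref{lem22}.}

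The plan is to bound the $Q$-index $q(G)$ by applying the Rayleigh quotient characterization together with the Cauchy--Schwarz inequality. Since $q(G)$ is the largest eigenvalue of the symmetric matrix $Q(G)=D(G)+A(G)$, and $Q(G)$ is a nonnegative symmetric matrix, the Perron--Frobenius theory guarantees that the unit positive eigenvector $h=(h_1,\dots,h_n)^T$ satisfies $Q(G)h=q(G)h$ with all $h_i>0$. The starting point will be the identity for the quadratic form already recorded in the excerpt, namely
\[
 q(G)=\langle Q(G)h,h\rangle=\sum_{i\sim j}(h_i+h_j)^2.
\]
I would expand this sum and split it into the pure-square contribution and the cross-term contribution, writing $\sum_{i\sim j}(h_i+h_j)^2=\sum_{i\sim j}(h_i^2+h_j^2)+2\sum_{i\sim j}h_ih_j$, which equals $\sum_{i\in V(G)}d(i)h_i^2+2\sum_{i\sim j}h_ih_j$.

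The key maneuver is to control the eigenvector in terms of a simpler comparison vector. Because $h$ is a unit vector, $\sum_i h_i^2=1$, so $\sum_i d(i)h_i^2\le \Delta(G)$ in general, but a sharper route gives the stated $\frac{2e(G)}{n-1}$ term. First I would apply the bound $q(G)\le n-2+\max_{i}\bigl(d(i) - \text{something}\bigr)$ via the equation
\[
 (q(G)-d(i))h_i=\sum_{i\sim j}h_j,
\]
choosing $i$ to be the index where $h_i$ is maximal. Rather than pushing this crude estimate, the cleaner approach is to use the well-known interlacing / averaging argument: consider the all-ones vector $\mathbf{1}$ restricted suitably, or more directly use the fact that $q(G)\le \max_{i\sim j}\{d(i)+d(j)\}$ is too weak, so instead one invokes the relation between $q(G)$ and the edge count. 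The crucial inequality is that for the largest eigenvalue,
\[
 q(G)\le \frac{2e(G)}{n-1}+n-2,
\]
which follows by comparing $Q(G)$ with the matrix $J-I+$ (diagonal), exploiting that $q(G)+\bigl(\text{other eigenvalues}\bigr)$ has trace $\sum_i d(i)=2e(G)$ and that each of the remaining $n-1$ eigenvalues of $Q(G)$ is at least $\delta\ge 0$, hence at least a computable lower bound.

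Concretely, the main step I would carry out is a trace argument. Let $q_1\ge q_2\ge\cdots\ge q_n\ge 0$ be the eigenvalues of $Q(G)$, so $q_1=q(G)$ and $\sum_i q_i=\tr Q(G)=\sum_i d(i)=2e(G)$. The hard part, and the crux of the whole lemma, is to establish a good lower bound on each of the smaller eigenvalues $q_2,\dots,q_n$; the natural candidate is to show $q_i\ge \frac{2e(G)-q_1}{n-1}$ is \emph{not} what we want, so instead one shows that $q(G)\ge q_2\ge\cdots$ forces $q_2+\cdots+q_n=2e(G)-q(G)$, and then bounds $q(G)$ by combining this with a second moment estimate $\sum_i q_i^2=\tr Q(G)^2=\sum_i d(i)^2+2e(G)$. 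I expect the main obstacle to be isolating $q(G)$ cleanly from these two trace identities while matching the exact constant $n-2$; the correct packaging is to observe that for the complementary estimate one uses $q(G)\le d(u)+d(v)$ for the edge attaining the eigenvector maximum and then averages, but reconciling that with $\frac{2e(G)}{n-1}+n-2$ requires the inequality $d(u)+d(v)\le (n-2)+\frac{2e(G)}{n-1}$ to be derived from a convexity/counting argument on degrees, which is where the care lies. Once the degree-sum of the critical edge is bounded by $n-2+\frac{2e(G)}{n-1}$ via the handshake constraint $\sum_i d(i)=2e(G)$ and $d(i)\le n-1$, the lemma follows immediately.
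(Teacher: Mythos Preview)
The paper does not prove Lemma~\ref{lem22}; it is quoted verbatim from \cite{FengPIMB09}. So there is no argument in the paper to compare against, and the only question is whether your proposal actually constitutes a proof.

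It does not. What you have written is a sequence of approaches, each abandoned before it produces the inequality, and the two places where you commit to a concrete step are both incorrect.

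First, in the trace argument you write that ``each of the remaining $n-1$ eigenvalues of $Q(G)$ is at least $\delta\ge 0$''. This is false: for any bipartite graph the least signless Laplacian eigenvalue is $0$, regardless of $\delta$. Without a usable lower bound on $q_2,\dots,q_n$, the identity $\sum_i q_i=2e(G)$ alone gives nothing, and adding $\sum_i q_i^2=\sum_i d(i)^2+2e(G)$ does not by itself isolate $q_1$ in the required form.

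Second, your final ``correct packaging'' reduces the lemma to showing
\[
d(u)+d(v)\le (n-2)+\frac{2e(G)}{n-1}
\]
for the edge $uv$ realising $\max_{uv\in E}\{d(u)+d(v)\}$. This inequality is simply false. Take $G$ to be $K_{n-1}$ with one pendant vertex attached; for $n=4$ this is the paw graph, where the maximum edge degree-sum is $5$ while $(n-2)+2e(G)/(n-1)=2+8/3=14/3<5$. The lemma still holds for this graph ($q(\text{paw})=(5+\sqrt{17})/2\approx 4.56<14/3$), which shows precisely that the bound $q(G)\le\max_{uv\in E}\{d(u)+d(v)\}$ is strictly weaker than Lemma~\ref{lem22} and cannot be used as an intermediate step.

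The standard route to this inequality (as in \cite{FengPIMB09}) is not via edge degree-sums but via the Merris-type bound $q(G)\le\max_v\{d(v)+m(v)\}$, where $m(v)=\frac{1}{d(v)}\sum_{u\sim v}d(u)$, followed by an elementary estimate on $d(v)+m(v)$ using $\sum_u d(u)=2e(G)$ and $d(v)\le n-1$. If you want to supply a proof rather than cite the result, that is the line to pursue; none of the mechanisms in your proposal will close the gap.
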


\begin{lemma}\cite{Bondy} \label{lem23}
The property that `` $G$ is $k$-path-coverable'' is $(n-k)$-stable.
\end{lemma}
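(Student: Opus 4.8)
The plan is to reduce the claim to the classical Bondy--Chv\'{a}tal Hamiltonian closure theorem, which asserts that Hamiltonicity is an $m$-stable property on graphs of order $m$. The bridge between $k$-path-coverability and Hamiltonicity is the following standard equivalence, which I would establish first: for every graph $G$ on $n$ vertices and every $k \ge 1$,
\[
G \text{ is } k\text{-path-coverable} \iff G \vee \overline{K_k} \text{ is Hamiltonian},
\]
where $\overline{K_k}$ denotes $k$ fresh mutually non-adjacent vertices $w_1,\ldots,w_k$, each joined to all of $V(G)$. This is the natural device for turning a statement about covering $V(G)$ by $k$ paths into a statement about a single spanning cycle.

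For the forward direction, given a cover of $V(G)$ by paths $P_1,\ldots,P_k$ (padding with trivial single-vertex paths if fewer than $k$ are actually needed) with endpoints $a_i,b_i$, the closed walk $a_1 P_1 b_1 w_1 a_2 P_2 b_2 w_2 \cdots a_k P_k b_k w_k a_1$ is a Hamiltonian cycle of $G \vee \overline{K_k}$, since each $w_i$ is adjacent to every vertex of $G$. For the converse, I would take a Hamiltonian cycle $C$ of $G \vee \overline{K_k}$ and delete the $k$ vertices $w_1,\ldots,w_k$, which breaks $C$ into at most $k$ arcs; because the $w_i$ form an independent set and the join adds no edges internal to $G$, two consecutive vertices of $C$ both lying in $V(G)$ are already adjacent in $G$, so each nonempty arc is a genuine path of $G$ and together they cover $V(G)$ with at most $k$ paths. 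The edge cases --- trivial paths, and two added vertices appearing consecutively on $C$ (which merely produces fewer than $k$ arcs, still permissible) --- are exactly the points that need careful checking, and I expect this bookkeeping to be the only delicate part of the argument.

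With the equivalence in hand, the stability assertion follows by a degree count. Suppose $uv \notin E(G)$, $d_G(u)+d_G(v) \ge n-k$, and $G+uv$ is $k$-path-coverable. Writing $H := G \vee \overline{K_k}$, which has order $n+k$, the forward direction applied to $G+uv$ gives that $(G+uv) \vee \overline{K_k} = H+uv$ is Hamiltonian. Since $u$ and $v$ are each joined to all $k$ added vertices, $d_H(u)=d_G(u)+k$ and $d_H(v)=d_G(v)+k$, whence
\[
d_H(u)+d_H(v) = d_G(u)+d_G(v)+2k \ge (n-k)+2k = n+k = |V(H)|.
\]
As $uv \notin E(G)$ forces $uv \notin E(H)$, the Bondy--Chv\'{a}tal theorem applied to the order-$(n+k)$ graph $H$ upgrades ``$H+uv$ is Hamiltonian'' to ``$H$ is Hamiltonian'', and the converse direction of the equivalence then turns ``$H$ is Hamiltonian'' back into ``$G$ is $k$-path-coverable'', which is precisely what $(n-k)$-stability demands. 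I would emphasize that the threshold $n-k$ is calibrated so that, after the join boosts each degree by $k$, the sum reaches exactly the Hamiltonian threshold $|V(H)|=n+k$; this is why the stated stability constant is $n-k$ and cannot be replaced by anything smaller through this route.
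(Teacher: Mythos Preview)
Your argument is correct and is, in fact, the classical Bondy--Chv\'{a}tal reduction. The paper itself offers no proof of this lemma; it is simply quoted from \cite{Bondy}, so there is nothing to compare at the level of technique. Your route via the equivalence $G$ is $k$-path-coverable $\Longleftrightarrow$ $G\vee\overline{K_k}$ is Hamiltonian, followed by the $m$-stability of Hamiltonicity on $m$ vertices, is exactly the standard derivation.

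One small wording issue: when fewer than $k$ paths suffice, you cannot literally ``pad with trivial single-vertex paths'' once $V(G)$ is already exhausted. The clean fix (which you essentially anticipate when you flag the edge cases) is to split a path of length $\ge 1$ into two at an edge, repeating until exactly $k$ paths are obtained; this is always possible when $n\ge k$, which holds throughout the paper. With that adjustment the forward direction of the equivalence goes through, and the rest of your argument is airtight.
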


\section{Proof of the Main Results}
In order to prove our main result,   we  need the following stability result.

\begin{lemma} \label{lem31}
Let $G$ be a graph of order $n\ge 8\delta +6k +7$, where $k\ge 1$.
If $\delta (G) \ge \delta \ge 1$ and
\[ e(G)>\frac{1}{2}(n-k-\delta -1)(n-k-\delta -2) +(\delta +k+1)(\delta +1), \]
then $G$ is $k$-path-coverable unless $G\subseteq B(n,k,\delta )$.
\end{lemma}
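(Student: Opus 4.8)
The plan is to use the $(n-k)$-closure together with a counting/structural argument. By Lemma~\ref{lem23}, the property of being $k$-path-coverable is $(n-k)$-stable, so it suffices to prove the statement for $G^{*}:=\mathrm{cl}_{n-k}(G)$; note $\delta(G^{*})\ge\delta(G)\ge\delta$ and $e(G^{*})\ge e(G)$, so the edge hypothesis is inherited. Thus I may assume that every pair of non-adjacent vertices $u,v$ in $G$ satisfies $d(u)+d(v)\le n-k-1$. The first step is to show that if $G$ is \emph{not} $k$-path-coverable then $G$ contains a "large" independent-like structure: pick a vertex $u$ of minimum degree (so $d(u)\ge\delta$ and $d(u)\le n-k-1-\delta$ by closure against any non-neighbour of small degree, provided a non-neighbour of degree $\ge\delta$ exists). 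The non-neighbourhood $W=V(G)\setminus N[u]$ has size $|W|\ge k+\delta$, and every vertex of $W\cup\{u\}$ has degree $\le n-k-1-d(u)$ — more usefully, one extracts a set $S$ with $|S|\ge\delta+k$ of pairwise non-adjacent (or low-degree) vertices each of degree at most $n-1-\delta-k$. This is the standard Li--Ning / Erdős-type setup adapted from Theorems~\ref{LiBinlongNingbo}--\ref{thm13}.

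The second step is the edge count. With such a set $S$ of size $\delta+k$ consisting of vertices of degree $\le n-1-\delta-k$, write $T=V(G)\setminus S$, $|T|=n-\delta-k$. Edges incident to $S$ number at most $(\delta+k)(n-1-\delta-k)$ counted with the usual care for double-counting inside $S$; edges inside $T$ number at most $\binom{n-\delta-k}{2}$. One wants to contradict
\[
e(G)>\tfrac12(n-k-\delta-1)(n-k-\delta-2)+(\delta+k+1)(\delta+1).
\]
A crude bound is not quite enough, so the key refinement — as in \cite{LiuDMGT} — is to use the minimum degree $\delta$ to sharpen the estimate on edges incident to $S$: each vertex of $S$ has degree at least $\delta$, but the "saved" edges come from the fact that the $\delta+k$ vertices of $S$ together send at most roughly $\delta(\delta+k)$ edges "usefully" beyond a clique-plus-join pattern, while the dominant $\binom{n-\delta-k-1}{2}$-type term must be compared against $\binom{n-k-\delta-1}{2}$. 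Carrying the bookkeeping through and using $n\ge 8\delta+6k+7$ to absorb the linear error terms yields $e(G)\le\tfrac12(n-k-\delta-1)(n-k-\delta-2)+(\delta+k+1)(\delta+1)$, the contradiction.

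The third step handles the degenerate case where the extremal count is \emph{met}, forcing equality throughout: then $G^{*}$ must be exactly $K_{\delta}\vee(K_{n-2\delta-k}\cup\overline{K_{\delta+k}})=B(n,k,\delta)$ up to the closure operation, i.e. $G\subseteq B(n,k,\delta)$. Here one checks that equality in the edge bound forces $T\setminus(\text{cut set})$ to be a clique, $S$ to be independent, and the cut set to have size exactly $\delta$ and be complete to everything — the unique graph achieving this is $B(n,k,\delta)$. The main obstacle I anticipate is the second step: getting the edge estimate tight enough requires carefully exploiting $\delta(G)\ge\delta$ to avoid a loss of order $\delta^2$ in the count, and the precise threshold $n\ge 8\delta+6k+7$ is presumably what makes all the lower-order terms cancel in the right direction; a naive bound would need a larger $n$. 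One also has to dispose of the boundary case where $u$ has no non-neighbour of degree $\ge\delta$ (so $W$ consists of low-degree vertices), which is handled by the same counting with an even smaller edge total.
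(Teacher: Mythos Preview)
Your outline has a genuine gap in the second step, and the paper's proof takes a substantively different route.

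First, the edge count you sketch does not close. With $S$ of size $\delta+k$ consisting of vertices of degree at most $n-1-\delta-k$ and $T=V(G)\setminus S$, the bound
\[
e(G)\le \binom{n-\delta-k}{2}+(\delta+k)(n-1-\delta-k)
\]
exceeds the threshold $\tfrac12(n-k-\delta-1)(n-k-\delta-2)+(\delta+k+1)(\delta+1)$ by exactly $(\delta+k+1)(n-2\delta-k-2)$, which is linear in $n$. You acknowledge this and invoke an unspecified ``refinement using $\delta(G)\ge\delta$,'' but nothing in the set-up forces more structure on $S$ than the degree cap, and no amount of care about double-counting inside $S$ recovers a linear-in-$n$ deficit. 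The hypothesis $n\ge 8\delta+6k+7$ cannot absorb a term of order $n$.

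Second, the paper does \emph{not} argue via a low-degree set extracted from a minimum-degree vertex. Instead it studies the maximum clique $C$ of the closure $G'=\mathrm{cl}_{n-k}(G)$, with $t=|C|$, and splits into three ranges of $t$. For $t$ small or intermediate one gets edge bounds (using that every vertex outside $C$ has degree $\le (n-k-1)/2$, respectively $\le n-k-t$) that contradict the hypothesis; for $t\ge n-\delta-k+1$ one gets a structural contradiction with maximality of $C$. This forces $t=n-\delta-k$ exactly. The crucial final step is \emph{not} counting at all: letting $H=G'-C$ and $F\subseteq C$ the ``frontier'' vertices with neighbours in $H$, one shows $|F|=\delta$ by explicitly constructing $k$-path covers (using Dirac's theorem inside $H$) whenever $|F|<\delta$, contradicting the assumption that $G'$ is not $k$-path-coverable. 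Only then does $G'=B(n,k,\delta)$ follow. Your proposal replaces this constructive path-building argument with a counting equality case, which is where the plan actually breaks down.
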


\begin{proof}
Let $G'=\mathrm{cl}_{n-k}(G)$. If $G'$ is $k$-path-coverable, then so is $G$
by the $(n-k)$-stablity of $k$-path-coverable. We now assume that $G'$ is not $k$-path-coverable,
note that $\delta (G') \ge \delta (G)\ge \delta$ and $e(G')\ge e(G)$. Next, we show that
\[ \omega (G') =n-\delta -k . \]
Let $C$ be the vertex set of a maximum clique of $G' $, since every two vertices whose degree sum is
at least $n-k$ are adjacent, then $C$ contains all vertices whose degree is at least $(n-k)/2$.
Let $H=G'-C$ and $t=|C|$.

Suppose first that $1\le t \le (n+k)/3+\delta +4/3$, observe that $d_{G'}(v) \le (n-k-1)/2$
for every $v\in V(H)$, if otherwise, we may assume $d_{G'}(v)>(n-k-1)/2$,
because $d_{G'}(v)$ is a positive integer, then $d_{G'}(v)\ge (n-k-1)/2+1/2=(n-k)/2$,
so $v$ is contained in $C$, a contradiction. Clearly, $d_C(v)\le t-1$,
so some tedious manipulation yields that
\begin{align*}
e(G')
&=e(G'[C]) +e(H) +e(V(H),C) \\
&=e(G'[C]) +\frac{1}{2}\left(  \sum\limits_{v\in V(H)} d_{G'}(v) +
\sum\limits_{v\in V(H)} d_C(v)\right) \\
&\le {t \choose 2} +\frac{1}{2}(n-t)\left( \frac{n-k-1}{2} +t-1\right) \\
&=\frac{n+k+1}{4}t +\frac{n(n-k-3)}{4} \\
&\le \frac{n+k+1}{4}\left( \frac{n+k}{3}+\delta +\frac{4}{3}\right) +\frac{n(n-k-3)}{4}  \\
&=\frac{1}{3}n^2 +\frac{3\delta -k -4}{12}n +(k+1)(3\delta -k +3) \\
&\le \frac{1}{2}(n-k-\delta -1)(n-k-\delta -2) +(\delta +1)(\delta +k+1),
\end{align*}
which leads to a contradiction (The last inequality follows due to $n\ge 8\delta +6k +7$).

Suppose second that $(n+k)/3+\delta +4/3 <t \le n-\delta -k -1$,
note that $d_{G'}(v)\le n-k-t$ for every $v\in V(H)$,
if otherwise, we assume $d_{G'}(v)\ge n-k-t+1$, then $v$ will be adjacent to every vertex in $C$.
Therefore
\begin{align*}
e(G')&= e(G'[C]) +e(H) +e(V(H),C) \\
&\le e(G'[C]) +\sum\limits_{v\in V(H)} d_{G'}(v) \\
&\le {t \choose 2} +(n-t)(n-k-t) \\
&=\frac{3}{2}t^2+ (-2n +k-\frac{1}{2})t +n(n-k) \\
&\le \frac{3}{2}\left( n-\delta -k-1\right)^2 +(-2n +k-\frac{1}{2})(n-\delta -k-1)+n(n-k)\\
&= \frac{1}{2}(n-k-\delta -1)(n-k-\delta -2) +(\delta +1)(\delta +k+1),
\end{align*}
also a contradiction.

Finally, suppose that $t\ge n-\delta -k +1$, since $G' $ is not a clique, then $V(H)$ is not empty.
Note that $d_{G'}(v) \ge \delta $ for every $v \in V(H)$ and $d_{G'}(u) \ge t-1\ge n-k-\delta $
for every $u\in C$, this means that every vertex of $V(H)$ is adjacent to every vertex of $C$,
this contradicts the fact that $C$ is a maximum clique.
Consequently, we now infer that $\omega (G')=n-\delta -k$.

We call that a vertex in $C$ is a {\it frontier} vertex if it has degree at least $n-\delta -k$ in $G'$,
that is to say, it has at least one neighbour in $H$, and denote $F=\{u_1,u_2,\ldots ,u_s\}$ by
the set of frontier vertices. Since $\omega (G')=n-\delta -k$, then $d_{G'}(v)\ge n-\delta -k -1$
for every $v\in C$, we can see by the $(n-k)$-stablity that every vertex in $H$ has degree exactly
$\delta $ in $G'$, furthermore, every vertex in $H$ is adjacent to every frontier vertex in $G'$, and
then we have $1\le s\le \delta$. Since $C$ is a clique $K_{n-\delta -k}$,
then we can choose a path $P$ in $C-F$ with two end-vertices $u_1$ and $u_s$.

We assert that $s=\delta$.

If $s=1$, then every vertex in $H$ has degree $\delta -1$ in $H$.
According to Dirac's theorem,
$H$ has a path of order at least $\delta $.
First, we assume that $H$ has a path $P'$ of order $\delta +1$,
and let $x,x'$ be the two end-vertices of $P'$, then $G'$ can be covered by one path
$x'P'xu_1P$ and the rest $(k-1)$ vertices in $H$, so $G'$ is $k$-path-coverable, a contradiction.
We now assume that $H$ has no paths of order more than $\delta $,
let $P'$ be a path of order $\delta$ in $H$ and $V(H-P')=\{v_1,v_2,\ldots ,v_{k}\}$.
Observe that $x$ has no neighbour in $V(H-P')$ and $d_{H}(x)=\delta -1$, which implies that
$xx'\in E(H)$. Since $H$ has no paths of order more than $\delta$, then every vertex in $V(H-P')$
has no neighbour in $P'$, so $H-P'$ has an edge, say, $v_1v_2$. Then $G'$ can be covered by two paths
$x'P'xu_1P,v_1v_2$ and the rest $(k-2)$ vertices in $H$, Also a contradiction.

If $2\le s\le \delta -1$, so every vertex in $H$ has degree $\delta -s$ in $H$. By Dirac's theorem,
$H$ has a path of order at least $\delta -s+1$. We first assume that $H$ has a path $P'$ of order
$\delta -s +2$ and $V(H-P')=\{v_1,v_2,\ldots ,v_{s+k-2}\}$, then
$v_1u_1v_2u_2\cdots v_{s-1}u_{s-1} x'P'x u_sP$ is path of order $n-(k-1)$, so $G'$ is
$k$-path-coverable, a contradiction. Now we assume that $H$ has no paths of order more than
$\delta -s +1$, let $P'$ be a path of order $\delta -s +1$ in $H$ and $V(H-P')=\{v_1,v_2,
\ldots ,v_{k+s-1}\}$.
Observe that $x$ has no neighbour in $V(H-P')$ and $d_{H}(x)=\delta -s$, which implies that
$xx'\in E(H)$. Since $H$ has no paths of order more than $\delta -s-1$, then every vertex in $V(H-P')$
has no neighbour in $P'$, so $H-P'$ has an edge, say, $v_1v_2$. Then
$v_su_1v_1v_2u_2v_3u_3\cdots v_{s-1}u_{s-1} x'P'xu_sP$ is a path of order $n-(k-1)$,
so $G'$ is $k$-path-coverable, this also leads to a contradiction.

We have thus proved that $s=\delta $, as we claimed.
Then $H$ is an independent vertices $\overline{K_{\delta +k}}$, i.e., $G'=B(n,k,\delta)$, we have $G\subseteq B(n,k,\delta)$. The proof  is now complete.
\end{proof}

\begin{lemma}\label{lem32}
For each graph $G\in \mathcal{B}_1(n,k,\delta)$,
we have $q(G)\geq 2(n-k-\delta -1)$.
\end{lemma}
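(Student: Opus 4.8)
The plan is to read off the lower bound directly from the Rayleigh characterisation of $q(G)$ recalled in Section~2, by evaluating the quotient on a single carefully chosen vector. Fix $G\in\mathcal{B}_1(n,k,\delta)$, so that $G$ is obtained from $B(n,k,\delta)$ by deleting a set $E'\subseteq E_1$ of at most $\lfloor(\delta+k)\delta/4\rfloor$ edges. I would first recall the relevant structure of $B(n,k,\delta)$: the set $Y\cup Z$, with $|Y|=\delta$ and $|Z|=n-2\delta-k$, induces a clique $K_{n-\delta-k}$, so $E_1$ consists of exactly $\binom{n-\delta-k}{2}$ edges and $G$ keeps $\binom{n-\delta-k}{2}-|E'|$ of them; moreover $X$ is an independent set of $\delta+k$ vertices, each of which is joined in $B(n,k,\delta)$, and hence also in $G$ (these edges lie outside $E_1$), to all $\delta$ vertices of $Y$.

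Next I would take the vector $x$ with $x_v=1$ for $v\in Y\cup Z$ and $x_v=0$ for $v\in X$. By the identity $\langle Q(G)x,x \rangle=\sum_{uv\in E(G)}(x_u+x_v)^2$, only two families of edges contribute to the numerator: each of the $\binom{n-\delta-k}{2}-|E'|$ surviving edges inside $Y\cup Z$ contributes $(1+1)^2=4$, and each of the $\delta(\delta+k)$ edges between $Y$ and $X$ contributes $(1+0)^2=1$, while the edges inside $X$ contribute nothing. Since $x^{T}x=|Y\cup Z|=n-\delta-k$, Rayleigh's principle yields
\[
q(G)\ \ge\ \frac{4\binom{n-\delta-k}{2}-4|E'|+\delta(\delta+k)}{n-\delta-k}=2(n-\delta-k-1)+\frac{\delta(\delta+k)-4|E'|}{n-\delta-k}.
\]
Then I would close the argument using the size bound in the definition of $\mathcal{B}_1(n,k,\delta)$: since $|E'|\le\lfloor(\delta+k)\delta/4\rfloor\le(\delta+k)\delta/4$, the last numerator is nonnegative, so $q(G)\ge 2(n-\delta-k-1)$, as claimed. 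As this estimate depends only on $|E'|$ and not on which edges are removed, it holds uniformly over $\mathcal{B}_1(n,k,\delta)$.

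I do not expect a serious obstacle; the only step that needs an idea is the choice of the test vector. Assigning weight $0$ to the independent set $X$, while still keeping the $Y$--$X$ edges inside the quadratic form, is what produces the surplus term $\delta(\delta+k)$, and the threshold $\lfloor(\delta+k)\delta/4\rfloor$ in the definition of $\mathcal{B}_1(n,k,\delta)$ is calibrated precisely so that this surplus compensates for the loss $4|E'|$ caused by deleting edges from the clique $Y\cup Z$. (The all-ones vector only gives $q(G)\ge 4e(G)/n$, which falls short of $2(n-\delta-k-1)$ by a positive quantity of order $\delta+k$, so weighting $X$ down is essential.) This also makes clear why the companion family $\mathcal{B}_2(n,k,\delta)$, obtained by deleting one further edge, must be treated separately: for it the same computation no longer guarantees $q(G)\ge 2(n-\delta-k-1)$.
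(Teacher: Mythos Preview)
Your argument is correct and is essentially the paper's own proof: the same indicator vector $x$ on $Y\cup Z$ is used, and the paper phrases the computation as $\langle Q(G)x,x\rangle-\langle Q(K_{n-\delta-k}\cup\overline{K_{\delta+k}})x,x\rangle=\delta(\delta+k)-4|E'|\ge 0$, which is exactly your surplus term, after which Rayleigh's inequality yields $q(G)\ge 2(n-\delta-k-1)$. Your direct evaluation of the Rayleigh quotient is just an unpacked version of the same step.
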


\begin{proof}
Let $G\in \mathcal{B}_1(n,k,\delta ).$
Note that $q(K_{n-k-\delta }\cup \overline{K_{\delta +k}})=2(n-k-\delta-1)$,
and we define a characteristic vector $x$,
where $x_i=1$ if $i\in Y\cup Z$ and $x_j=0$ if $j\in X$.
Obviously $x$ is the eigenvector corresponding to $q(K_{n-k-\delta-1}\cup \overline{K_{\delta+k+1}})$.
Then we get
\[ \langle Q(G)x,x \rangle-\langle Q(K_{n-k-\delta}\cup \overline{K_{\delta+k}})x,x\rangle
=\delta (\delta +k)-4|E'|\geq 0. \]
By Rayleigh's principle, we have $q(G)\geq 2(n-k-\delta-1).$
\end{proof}

\begin{lemma}\label{prop33}
For each graph $G\in \mathcal{B}_2(n,k,\delta)$, we have $q(G)> 2(n-k-\delta-1)-1.$
\end{lemma}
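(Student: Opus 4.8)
The plan is to reuse, almost verbatim, the test-vector estimate from the proof of Lemma~\ref{lem32}. Deleting one edge beyond the threshold $\left\lfloor \frac{(\delta+k)\delta}{4}\right\rfloor$ can lower the relevant quadratic form only by a bounded amount, and after dividing by $\langle x,x\rangle=|Y\cup Z|=n-k-\delta$ this loss is strictly less than $1$, so the $Q$-index still stays above $2(n-k-\delta-1)-1$.

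Concretely, I would fix $G\in\mathcal{B}_2(n,k,\delta)$ — it suffices to treat the densest member $G=R(n,k,\delta)-E'$ with $E'\subset E_1$ and $|E'|=\left\lfloor \frac{(\delta+k)\delta}{4}\right\rfloor+1$ — and, exactly as in the proof of Lemma~\ref{lem32}, use the $0/1$ vector $x$ with $x_i=1$ for $i\in Y\cup Z$ and $x_j=0$ for $j\in X$. Then $\langle x,x\rangle=n-k-\delta$, and $x$ is precisely the eigenvector of $K_{n-k-\delta}\cup\overline{K_{\delta+k}}$ for its $Q$-index $2(n-k-\delta-1)$. Counting edge contributions to $\sum_{i\sim j}(x_i+x_j)^2$ exactly as in Lemma~\ref{lem32} — each deleted edge of $E'$ has both ends in $Y\cup Z$ and hence removes $(1+1)^2=4$ from the form — one gets
\[
\langle Q(G)x,x\rangle=2(n-k-\delta-1)(n-k-\delta)+\Bigl(\delta(\delta+k)-4|E'|\Bigr).
\]
Writing $\delta(\delta+k)=4\left\lfloor \frac{(\delta+k)\delta}{4}\right\rfloor+r$ with $r\in\{0,1,2,3\}$, the correction term equals $r-4\ge-4$, so by Rayleigh's principle
\[
q(G)\ \ge\ \frac{\langle Q(G)x,x\rangle}{\langle x,x\rangle}\ \ge\ 2(n-k-\delta-1)-\frac{4}{n-k-\delta}.
\]
Since $n$ is large (in particular $n-k-\delta>4$ in the range relevant to Theorem~\ref{thm14}), the right-hand side exceeds $2(n-k-\delta-1)-1$, which is the assertion.

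I do not expect a genuine obstacle here: the computation is the one already carried out in Lemma~\ref{lem32}, and the only point requiring care is the elementary floor bound $\delta(\delta+k)-4|E'|\ge-4$. This is precisely what makes $\left\lfloor \frac{(\delta+k)\delta}{4}\right\rfloor$ the natural cutoff separating $\mathcal{B}_1$ from $\mathcal{B}_2$: at that number of deletions the correction term is nonnegative (so $q\ge 2(n-k-\delta-1)$, Lemma~\ref{lem32}), whereas one further deletion can push it only into $\{-4,-3,-2,-1\}$, which after normalising by $n-k-\delta$ costs strictly less than one unit. (If one reads the family $\mathcal{B}_2$ literally as consisting of possibly proper subgraphs, one further line is needed to observe that for this lower bound the densest admissible graph $R(n,k,\delta)-E'$ is the extremal case, exactly as in the proof of Lemma~\ref{lem32}.)
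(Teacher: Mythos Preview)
Your proposal is correct and follows exactly the paper's own argument: the same $0/1$ test vector $x$ supported on $Y\cup Z$, the same identity $\langle Q(G)x,x\rangle-\langle Q(K_{n-\delta-k}\cup\overline{K_{\delta+k}})x,x\rangle=\delta(\delta+k)-4|E'|\ge -4$, and the same conclusion $q(G)\ge 2(n-k-\delta-1)-4/\|x\|^2>2(n-k-\delta-1)-1$. Your closing remark about proper subgraphs is a point the paper silently passes over as well.
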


\begin{proof}
Let $x$ be the column vector defined in the proof of Lemma \ref{lem32}. We obtain that
\[  \langle Q(G)x,x \rangle-\langle Q(K_{n-\delta-k}\cup \overline{K_{k+\delta}})x,x\rangle
=\delta(\delta+k)-4|E'|\geq -4.\]
Similarly, we have $q(G)\geq 2(n-k-\delta-1)-\frac{4}{\|x\|^2}>2(n-k-\delta-1)-1.$
\end{proof}

Let $G$ be a graph of $\mathcal{B}_2(n,k,\delta)$ with the largest signless Laplacian spectral radius,
and furthermore we may assume that the induced graph $G[Y]$ contains the largest number of edges.
Let $h$ be the eigenvector corresponding to $q(G)$.
We assume further that $\max_{i\in V(G)}h_i=1$ by a rescaling.

\begin{lemma}\label{prop34}
Assume $G\in \mathcal{B}_2(n,k,\delta)$.
For each $i\in X$, we have $h_i\leq \frac{\delta}{q(G)-\delta}$.
\end{lemma}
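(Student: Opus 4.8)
The plan is to use the eigenvalue equation \eqref{eq1} applied at a vertex $i\in X$. Recall that in $B(n,k,\delta)$ the vertices of $X$ have degree exactly $k$ (they are the $\delta$ vertices of the independent set that are joined only to the $K_\delta$-part in the definition of $B$); passing to a subgraph $G\subseteq R(n,k,\delta)-E'$ with $G\in\mathcal{B}_2(n,k,\delta)$ can only delete edges among $Y\cup Z$, so the degree of every $i\in X$ is still at most... well, more to the point, the neighbours of $i\in X$ all lie in $Y$ (the copy of $K_\delta$ whose vertices have full degree $n-1$), and there are at most $\delta$ of them. So first I would record that $d(i)\le\delta$ for $i\in X$ and $N(i)\subseteq Y$.

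Next, apply \eqref{eq1} at $i\in X$:
\[
(q(G)-d(i))h_i=\sum_{j\sim i}h_j\le \sum_{j\in N(i)}h_j.
\]
Since we have normalized $\max_{v\in V(G)}h_v=1$, each $h_j\le 1$, and $|N(i)|=d(i)\le\delta$, the right-hand side is at most $d(i)\le\delta$. Hence $(q(G)-d(i))h_i\le\delta$. Because $G\in\mathcal{B}_2(n,k,\delta)$ we have $q(G)>2(n-k-\delta-1)-1$ by Lemma~\ref{prop33}, and with the hypothesis on $n$ this is far larger than $\delta$, so $q(G)-d(i)\ge q(G)-\delta>0$ and we may divide:
\[
h_i\le \frac{\delta}{q(G)-d(i)}\le \frac{\delta}{q(G)-\delta},
\]
using $d(i)\le\delta$ in the last step (the function $x\mapsto \delta/(q(G)-x)$ is increasing on $x<q(G)$). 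This is exactly the claimed bound.

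I do not expect a serious obstacle here; the only point that needs a line of care is the justification that every neighbour of a vertex in $X$ lies in $Y$ and that $d(i)\le\delta$, which follows directly from the structure of $B(n,k,\delta)=K_\delta\vee(K_{n-2\delta-k}\cup\overline{K_{\delta+k}})$ — the independent part $\overline{K_{\delta+k}}$ contributes the vertices of degree $\delta+k$... and one checks from the definitions of $X$, $E_1$, and $\mathcal{B}_2$ that deleting edges of $E'\subset E_1$ (both endpoints in $Y\cup Z$) leaves the $X$-vertices untouched, so each $i\in X$ keeps exactly its $\delta$ neighbours in $Y$, giving $d(i)=\delta$ (or $d(i)\le\delta$ if one only assumes $G$ is a subgraph). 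The rest is the one-line Rayleigh/eigenvector manipulation above.
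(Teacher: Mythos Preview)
Your argument is exactly the paper's: apply the eigenvector equation \eqref{eq1} at $i\in X$, use $d(i)=\delta$ and $\max_v h_v=1$ to bound the right-hand side by $\delta$, and divide. So the approach is the same and the proof is correct.

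That said, your description of the structure of $B(n,k,\delta)=K_\delta\vee(K_{n-2\delta-k}\cup\overline{K_{\delta+k}})$ contains a few slips you should clean up: the set $X$ is the independent part $\overline{K_{\delta+k}}$, which has $\delta+k$ vertices (not $\delta$), and each such vertex has degree exactly $\delta$ (not $k$, and not $\delta+k$ as you write near the end) since its neighbourhood is precisely $Y=V(K_\delta)$. The paper's own line ``$X=\{v:d(v)=k\}$'' is a typo for $d(v)=\delta$, as its proof of this lemma confirms by using $d(i)=\delta$. None of this affects your argument, since you correctly extract $|N(i)|=d(i)=\delta$ and $N(i)\subseteq Y$, but the surrounding prose should be fixed.
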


\begin{proof}
Applying equation (\ref{eq1}) at $i \in X$, we have
\[  (q(G)-d(i))h_i=\sum_{j \in Y}h_j. \]
Since $d(i)=\delta$ and $\max_{i\in V(G)}h_{i}=1$, we have $(q(G)-\delta )h_i\leq \delta $,
the proof is completed.
\end{proof}

Next, we divide $Y,Z$ into two parts, respectively.
\begin{gather*}
Y_1=\{i\in Y: d(i)=n-1\},\quad Y_2=\{i\in Y: d(i)\leq n-2\},\\
Z_1=\{i\in Z: d(i)=n-\delta-k-1\},\quad Z_2=\{i\in Z: d(i)\leq n-k-\delta-2\}.
\end{gather*}
Before proving Lemma \ref{prop35}, let us make clear the following truth:
$Z_1\neq \emptyset $ as $n\geq (\delta^2 +k\delta +7\delta +6k +9)(\delta^2 +k\delta +1)$.
We already know the upper bound of $h_i$ for each $i\in X$
and ${\delta }/{(q(G)-\delta)}<1$ when $n>2\delta +k +3/2$.
Therefore we get $\max_{i\in V(G)}h_{i}=\max_{i\in Y\cup Z}h_{i}$.
To obtain the difference between $\max_{i\in V(G)}h_{i}$ and $\min_{i\in Y \cup Z}h_{i}$,
we need to prove the following several results.

\begin{lemma}\label{prop35}
Assume $G\in \mathcal{B}_2(n,k,\delta)$ as defined above.
\begin{enumerate}[(1)]

\item If $Y_2\neq \emptyset$, then $h_i> h_j $ for all $i\in Z_1$ and $j\in Y_2$.

\item  If $Z_2\neq \emptyset$, then $h_i>h_j $ for all $i\in Z_1$ and $j\in Z_2$.

\item If $Y_1\neq \emptyset$, then $h_i>h_j$ for any $i\in Y_1$ and $j\in Z_1$.
\end{enumerate}
\end{lemma}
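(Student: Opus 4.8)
The plan is to derive all three inequalities from the perturbed eigenvalue equation~\eqref{eq2} of Lemma~\ref{lem21}, applied to suitably chosen pairs of vertices, after first pinning down the neighbourhoods of the relevant vertices in the graph $G\in\mathcal{B}_2(n,k,\delta)$. Recall $G$ is obtained from $R(n,k,\delta)=K_\delta\vee(K_{n-2\delta-k}\cup\overline{K_{\delta+k}})$ by deleting an edge set $E'\subset E_1$ with $|E'|=\lfloor(\delta+k)\delta/4\rfloor+1$, where $E_1$ consists of edges inside $Y\cup Z$; so every vertex of $Y\cup Z$ loses at most $|E'|=O((\delta+k)\delta)$ neighbours from its $R(n,k,\delta)$-degree, while the adjacencies from $X$ to $Y$ are untouched. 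The first thing I would record is a crude two-sided bound on $q(G)$: from Lemma~\ref{prop33} we have $q(G)>2(n-k-\delta-1)-1$, and from Lemma~\ref{lem22} together with $e(G)\le e(R(n,k,\delta))$ one gets $q(G)\le 2n-O(1)$; in particular $q(G)-d(i)$ is comparable to $n$ for vertices of small degree and is bounded (indeed $O((\delta+k)\delta)$) for vertices of degree near $n-1$. These estimates are what make the sign of the right-hand side of~\eqref{eq2} detectable.

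For part~(1), take $i\in Z_1$ (so $d(i)=n-\delta-k-1$ and $i$ is adjacent to all of $Y\cup Z$ except itself, but to no vertex of $X$) and $j\in Y_2$ (so $d(j)\le n-2$, and $j$ is adjacent to every vertex of $X$ plus all but at most $|E'|$ vertices of $Y\cup Z$). Plugging these into~\eqref{eq2} gives
\[
(q(G)-d(i))(h_i-h_j)=(d(i)-d(j))h_j+\sum_{u\in N(i)\setminus N(j)}h_u-\sum_{v\in N(j)\setminus N(i)}h_v .
\]
The set $N(j)\setminus N(i)$ is contained in $X$ together with at most $|E'|$ vertices of $Y\cup Z$; by Lemma~\ref{prop34} each $h_u$ with $u\in X$ is at most $\delta/(q(G)-\delta)=O(1/n)$, and there are $\delta+k$ such vertices, so that sum is $O((\delta+k)/n)+O(|E'|)$ times the max entry $1$ — but wait, the $|E'|$ term is not small, so I need to be more careful: actually $N(i)\setminus N(j)$ also has size up to $|E'|$ inside $Y\cup Z$, and these two discrepancy sets nearly cancel. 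The clean way is to compare both $i$ and $j$ against a common reference or to note that $(d(i)-d(j))h_j$ has the favourable sign only if $d(i)\ge d(j)$, which need not hold. I expect the correct route is: show $N(j)\supseteq X$ while $N(i)\cap X=\emptyset$, so $N(j)\setminus N(i)\supseteq X$ contributes a term of size at most $(\delta+k)\cdot\delta/(q(G)-\delta)$, i.e.\ $O((\delta+k)\delta/n)$, which is $o(1)$ by the hypothesis $n\ge(\delta^2+k\delta+7\delta+6k+9)(\delta^2+k\delta+1)$; and the remaining symmetric difference inside $Y\cup Z$ has total size $\le 2|E'|\le (\delta+k)\delta/2+2$ with all entries $\le 1$, giving a contribution bounded in absolute value by roughly $(\delta+k)\delta/2$. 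Meanwhile $(d(i)-d(j))h_j$: since $j\in Y_2$ has $d(j)\le n-2$ and $i\in Z_1$ has $d(i)=n-\delta-k-1$, we get $d(i)-d(j)\ge -(\delta+k-1)$, so this term is $\ge -(\delta+k)h_j$. Then the $Q$-equation at $i$ and at $j$ individually, combined with a lower bound $h_i,h_j\ge c$ for some constant $c$ bounded away from $0$ on $Y\cup Z$ (which itself follows from~\eqref{eq1}: $(q(G)-d(v))h_v=\sum_{w\sim v}h_w\ge$ a sum over the clique part, of size $\ge n-2\delta-k$, each entry of which is in turn bounded below), forces $h_i-h_j>0$ once $n$ is large. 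This lower bound $h_v\ge c$ for $v\in Y\cup Z$ is, I think, the genuine linchpin; I would establish it first, as a preliminary claim, by a bootstrap: all such $v$ are adjacent to the big clique $Z\cup$(part of $Y$), whose entries are mutually comparable, and $q(G)\approx 2n$ while $d(v)\le n$, so $h_v\ge(\text{clique size})\cdot(\min\text{ clique entry})/(q(G)-d(v))$.

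Parts~(2) and~(3) follow the same template with the degree bookkeeping adjusted. For part~(2), $i\in Z_1$ and $j\in Z_2$ both lie in $Z$ and hence both avoid $X$, so $X$ does not enter $N(i)\triangle N(j)$ at all; here $d(i)=n-\delta-k-1>d(j)$, so $(d(i)-d(j))h_j$ is \emph{positive}, and the only negative contribution is $-\sum_{v\in N(j)\setminus N(i)}h_v$ with $|N(j)\setminus N(i)|\le|E'|$; since $d(i)-d(j)\ge 1$ and $h_j\ge c$ while the bad sum is $O(|E'|)\cdot 1$ — hmm, that is not obviously dominated either, so again I would instead argue via the difference being forced positive by $q(G)-d(i)\gg q(G)-d(j)$ being false; rather, the right comparison is that $N(i)\setminus N(j)$ and $N(j)\setminus N(i)$ both sit inside $Y\cup Z$ and differ by exactly the edges of $E'$ incident to $i$ versus to $j$, and one uses the extremality of $G$ (chosen to maximize $q$, then to maximize $e(G[Y])$) to control which edges were deleted. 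For part~(3), $i\in Y_1$ has $d(i)=n-1$ so $N(i)=V(G)\setminus\{i\}\supseteq X$, whereas $j\in Z_1$ has $N(j)\cap X=\emptyset$; thus $N(i)\setminus N(j)\supseteq X\cup(Z\setminus\{j\})\setminus\ldots$ is \emph{large and positive}, contributing $\ge\sum_{Z}h_z\ge(\,|Z|-1)c$, a quantity of order $n$, against which $|(d(i)-d(j))h_j|\le(\delta+k)$ and the negative sum (of size $\le|E'|$) are negligible; so $h_i-h_j>0$ is immediate once $q(G)-d(i)=q(G)-n+1>0$, which holds. The main obstacle throughout is keeping the $O((\delta+k)\delta)$-sized discrepancies from the deleted edge set $E'$ under control relative to the constant-order lower bound on entries in $Y\cup Z$ — this is exactly why the threshold on $n$ is quadratic in $\delta^2+k\delta$, and verifying that the stated bound suffices is the one place where the routine-looking inequalities actually have to be pushed through.
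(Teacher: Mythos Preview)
Your proposal misses the key structural observations that make this lemma short, and for part~(1) it takes a route that does not close.

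For parts~(2) and~(3) you overlook the decisive fact: if $i\in Z_1$ then $N_G(i)=(Y\cup Z)\setminus\{i\}$ exactly (no edges of $E'$ touch $i$), and if $i\in Y_1$ then $N_G(i)=V(G)\setminus\{i\}$. In either case, for the relevant $j$ (in $Z_2$ or $Z_1$ respectively) one has the strict containment $N(j)\setminus\{i\}\subset N(i)\setminus\{j\}$. Plugging this into~\eqref{eq2} and moving the single term $h_i-h_j$ across gives
\[
(q(G)-d(i)+1)(h_i-h_j)=(d(i)-d(j))h_j+\sum_{u\in N(i)\setminus N[j]}h_u,
\]
whose right-hand side is strictly positive since $d(i)>d(j)$ and all $h_u>0$. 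That is the whole argument; no lower bound on $\min_{Y\cup Z}h_v$, no size estimates on $E'$, and no appeal to the hypothesis on $n$ are needed. Your worry that ``$|N(j)\setminus N(i)|\le|E'|$'' might be large is misplaced: for these pairs $N(j)\setminus N(i)=\{i\}$ exactly.

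For part~(1) the situation is genuinely different, and your algebraic approach runs into the very obstacle you flag: with $i\in Z_1$, $j\in Y_2$ one has $N(j)\setminus N(i)=X\cup\{i\}$ while $N(i)\setminus N(j)=\{j\}\cup D_j$ (where $D_j$ are the deleted neighbours of $j$), and $d(i)-d(j)=|D_j|-\delta-k$ can be negative; the sign of the right-hand side of~\eqref{eq2} is not forced. The paper does \emph{not} argue analytically here. Instead it uses the extremal choice of $G$ (maximum $q$, and among those maximum $e(G[Y])$): assuming $h_i\le h_j$, one finds a vertex $w\in Y\cup Z$ with $jw\notin E(G)$ (such $w$ exists since $j\in Y_2$) and $iw\in E(G)$ (since $i\in Z_1$), and forms $G'=G-iw+jw\in\mathcal{B}_2(n,k,\delta)$. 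Then $\langle Q(G')h,h\rangle-\langle Q(G)h,h\rangle=(h_j-h_i)(h_i+h_j+2h_w)\ge 0$, so $q(G')\ge q(G)$, yet $G'[Y]$ has one more edge than $G[Y]$, contradicting the choice of $G$. This edge-switching step is the missing idea in your sketch; without it part~(1) does not go through.
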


\begin{proof}
(1) By contradiction, we assume that there are some $i\in Z_1$ and $j\in Y_2$ satisfying $h_i\leq h_j$.
Let $k\in Y$ and $jk\in E(G)$, define a new graph $G'\in \mathcal{B}_2(n,k,\delta)$
by removing an edge $ik$ and adding a new edge $jk$.
Since
\[ \langle Q(G')h,h\rangle-\langle Q(G)h,h\rangle=(h_j-h_i)(h_i+h_j+2h_k)\geq 0,\]
we get $q(G')\geq q(G)$ and the induced graph $G'[Y]$ has more edges than $G[Y]$,
which contradicts the choice of $G$. The proposition is proved.

(2) By Lemma \ref{lem21}, for each $i\in Z_1$ and $j\in Z_2$, we have
\[ (q(G)-d(i))(h_{i}-h_{j})
=(d(i)-d(j))h_{j}+\sum_{k\in N(i)\setminus N(j)}h_{k}-\sum_{l\in N(j)\setminus N(i)}h_{l}.\]
We notice that $N(j)\setminus \{i\}\subset N(i)\setminus \{j\}$, equivalently,
\begin{eqnarray*}
(q(G)-d(i)+1)(h_{i}-h_{j})=(d(i)-d(j))h_{j}+\sum_{k\in N(i)\setminus N[j]}h_{k}.
\end{eqnarray*}
Note that $d(i)>d(j),$ the proof is completed.

(3) If $Y_1\neq \emptyset$. For each $i\in Y_1$ and $j\in Z_1$, by Lemma \ref{lem21}, we have
\[ (q(G)-d(i))(h_{i}-h_{j})
=(d(i)-d(j))h_{j}+\sum_{k\in N(i)\setminus N(j)}h_{k}-\sum_{l\in N(j)\setminus N(i)}h_{l}.\]
Note that $N(j)\setminus \{i\}\subset N(i)\setminus \{j\}$, rearranging the last equation, we obtain
\begin{eqnarray}\label{equaion1}
(q(G)-d(i)+1)(h_{i}-h_{j})=(d(i)-d(j))h_{j}+\sum_{k\in N(i)\setminus N[j]}h_{k}.
\end{eqnarray}
Since $d(i)>d(j),$ the proof is completed.
\end{proof}

The key step for proving Lemma \ref{lem37} is to show Lemma \ref{prop36}.

\begin{lemma}\label{prop36}
Assume $G\in \mathcal{B}_2(n,k,\delta)$ as defined above. We have
\[ \max\limits_{i\in V(G)}h_i-\min\limits_{j\in Y\cup Z}h_j
< \frac{(\delta+k)(\delta +4)+4}{2(q(G)-n+1)}.\]
\end{lemma}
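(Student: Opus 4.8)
The plan is to bound the gap $\max_{i\in V(G)}h_i-\min_{j\in Y\cup Z}h_j$ by first pinning down \emph{where} the maximum and minimum are attained among $Y\cup Z$, and then using the eigenvalue equation (\ref{eq1}) together with the comparison identity from Lemma~\ref{lem21} to control the difference in $h$-values between any two vertices of $Y\cup Z$. Recall that the discussion preceding Lemma~\ref{prop35} already established $\max_{i\in V(G)}h_i=\max_{i\in Y\cup Z}h_i$ (because ${\delta}/{(q(G)-\delta)}<1$ for $n$ large), so it suffices to work entirely inside $Y\cup Z$. By Lemma~\ref{prop35}, among $Y\cup Z=Y_1\cup Y_2\cup Z_1\cup Z_2$ the values are ordered so that $Z_1$-vertices dominate $Y_2$- and $Z_2$-vertices, while $Y_1$-vertices dominate $Z_1$-vertices; hence the minimum over $Y\cup Z$ is attained at some $j_0\in Y_2\cup Z_2$ (or at a $Z_1$-vertex if $Y_2\cup Z_2=\emptyset$), and the maximum at some vertex of $Y_1\cup Z_1$.

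The main computation: fix $j_0$ achieving the minimum and let $i_0$ achieve the maximum. I would apply Lemma~\ref{lem21} (equation (\ref{eq2})) to the pair $(i_0,j_0)$, or rather the rearranged form
\[
(q(G)-d(i_0)+1)(h_{i_0}-h_{j_0})=(d(i_0)-d(j_0))h_{j_0}+\sum_{k\in N(i_0)\setminus N[j_0]}h_k
\]
that appears in the proof of Lemma~\ref{prop35} whenever $N(j_0)\setminus\{i_0\}\subset N(i_0)\setminus\{j_0\}$ — this containment holds in the relevant cases since $i_0\in Y_1\cup Z_1$ has $N[i_0]\supseteq Y\cup Z$ essentially, while $j_0$'s neighbourhood is the full vertex set minus a bounded set. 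The right-hand side is then bounded by $(d(i_0)-d(j_0))\cdot 1$ plus the number of summands times $1$ (using $h_k\le 1$ everywhere). The degree difference $d(i_0)-d(j_0)$ and the size of $N(i_0)\setminus N[j_0]$ are both controlled by how far $B(n,k,\delta)$ has been perturbed: since $G\in\mathcal{B}_2(n,k,\delta)$ we have deleted $E'\subset E_1$ with $|E'|=\lfloor(\delta+k)\delta/4\rfloor+1$ edges among $Y\cup Z$, so each vertex of $Y\cup Z$ has had its degree dropped by at most $2|E'|$, and the symmetric difference $N(i_0)\triangle N(j_0)$ restricted to $Y\cup Z$ likewise has size at most $2\cdot 2|E'|$, while on $X$ the difference is at most $\delta+k$. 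Assembling: the numerator on the right is at most roughly $(\delta+k) + O(|E'|) = (\delta+k)+ (\delta+k)\delta/2 + O(1)$, which should simplify to $\tfrac{1}{2}\big((\delta+k)(\delta+4)+4\big)$ after one keeps exact track of the floor term $\lfloor(\delta+k)\delta/4\rfloor+1$ and the vertices in $X$ that $i_0$ sees but $j_0$ does not (all of $X$ lies in $N(i_0)$ but a vertex of $Z$ is not adjacent to $X$, contributing the $(\delta+k)$). Dividing by $q(G)-d(i_0)+1\ge q(G)-(n-1)+1 = q(G)-n+2 > q(G)-n+1$ (since $d(i_0)\le n-1$) yields exactly the claimed bound.

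The step I expect to be the main obstacle is the careful bookkeeping in the numerator: one must verify the containment $N(j_0)\setminus\{i_0\}\subset N(i_0)\setminus\{j_0\}$ in \emph{every} case for the minimizing $j_0$ (including the degenerate case where the minimum sits in $Z_1$ because $Y_2=Z_2=\emptyset$, where one should instead compare against a true minimizer or argue the bound is vacuous/easier), and then count the summands in $\sum_{k\in N(i_0)\setminus N[j_0]}h_k$ exactly rather than crudely. The count splits as $(N(i_0)\cap X)\setminus N[j_0]$, which is all of $X$ when $j_0\in Z$ and empty when $j_0\in Y$, plus $(N(i_0)\cap(Y\cup Z))\setminus N[j_0]$, which is bounded by the number of $E'$-edges incident to $j_0$ inside $Y\cup Z$; combining with the degree-difference term $d(i_0)-d(j_0)$, which counts essentially the same missing edges plus possibly the $|X|$ discrepancy, requires being careful not to double-count. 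A clean way to organize this is to write $d(i_0)-d(j_0)+|N(i_0)\setminus N[j_0]| = |N(i_0)| - |N(j_0)| + |N(i_0)\setminus N[j_0]|$ and then bound each of the at-most-$\lfloor(\delta+k)\delta/4\rfloor+1$ deleted edges' total contribution; the worst case is all deleted edges incident to $j_0$ and none to $i_0$, which after arithmetic with the floor gives the $(\delta+k)(\delta+4)/2 + 2$ bound. Once the numerator is in hand, the denominator estimate $q(G)>n-1$ is immediate from $q(G)\ge 2(n-\delta-k-1)-1 > n-1$ for $n$ in the stated range, so the only real work is the numerator.
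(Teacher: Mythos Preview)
Your approach is essentially the paper's: apply the rearranged identity from Lemma~\ref{lem21} with $i_0$ a maximizing vertex in $Y_1\cup Z_1$ and $j_0$ an arbitrary vertex of $Y\cup Z$, bound the numerator by the degree gap plus $|N(i_0)\setminus N[j_0]|$ (both controlled by $|E'|=\lfloor(\delta+k)\delta/4\rfloor+1$ together with $|X|=\delta+k$), and divide by $q(G)-d(i_0)+1\ge q(G)-n+2>q(G)-n+1$. The paper simply organizes this as an explicit case split ($Y_1\neq\emptyset$, so $i_0\in Y_1$, versus $Y_1=\emptyset$, so $i_0\in Z_1$) with subcases according to whether $j_0\in Y_2,\,Z_1,\,Z_2$, and carries out the arithmetic exactly in each; your unified sketch would need the same bookkeeping spelled out to reach the constant $(\delta+k)(\delta+4)+4$.

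One point to tighten: your claim that $N(j_0)\setminus\{i_0\}\subset N(i_0)\setminus\{j_0\}$ ``holds in the relevant cases'' is false precisely when $i_0\in Z_1$ and $j_0\in Y_2$, since then $X\subseteq N(j_0)\setminus N[i_0]$. The identity from Lemma~\ref{lem21} still applies, but the right-hand side carries an extra term $-\sum_{l\in X}h_l$; this is nonpositive, so your upper bound survives. (The paper handles exactly this case the same way: it writes $-\sum_{l\in X}h_l$ explicitly and then drops it.) In this same subcase $d(i_0)-d(j_0)$ can be negative, which again only helps the upper bound; just be aware that your phrase ``degree dropped by at most $2|E'|$'' overcounts, since a vertex loses at most $|E'|$ incident edges, not $2|E'|$.
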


\begin{proof}
We distinguish it into two cases.

\textbf{Case 1}:
$Y_1\neq\emptyset.$ Notice that $\max_{i\in V(G)}h_i $ is attained by vertices from $Y_1$  by the above analysis. Hence $d(i)=n-1$, i.e., the vertex $i$ is adjacent to all other vertices in $G$.
\medskip

\textbf{Subcase 1.1}:
If $j\in Y_2,$ we have $N(i)\setminus N[j]=\{k:k\in Y\cup Z$ and $k\nsim j\}$.
Thus we have $d(i)-d(j)\leq \left\lfloor \frac{(\delta+k)\delta}{4} \right\rfloor +1$
and $|N(i)\setminus N[j]|\leq \left\lfloor \frac{(\delta+k)\delta}{4} \right\rfloor +1$.
Note that $N(j)\setminus \{i\}\subset N(i)\setminus \{j\}$, applying (4), we get
\begin{align*}
(q(G)-d(i)+1)(h_{i}-h_{j})
=(d(i)-d(j))h_{j}+\sum_{k\in N(i)\setminus N[j]}h_{k}
\leq  \frac{(\delta+k)\delta}{2} +2.
\end{align*}
Since $i\in Y_1$, then
\begin{align*}
h_{i}-h_{j} \leq  \frac{\frac{(\delta+k)\delta}{2}+2}{q(G)-(n-1)+1}
= \frac{(\delta+k)\delta +4}{2(q(G)-n+2)}
< \frac{(\delta+k)(\delta+4)+4}{2(q(G)-n+1)}.
\end{align*}

\textbf{Subcase 1.2}:
If $j\in Z_1$, we have $N(i)\setminus N[j]=X$ and $N(j)\setminus N[i]=\emptyset$.
Then $|N(i)\setminus N[j]|=\delta+k+1$.  Meanwhile, note that $d(i)-d(j)= \delta+k+1$. So
\begin{align*}
(q(G)-d(i)+1)(h_{i}-h_{j})
&=(d(i)-d(j))h_{j}+\sum_{k\in N(i)\setminus N[j]}h_{k}-\sum_{l\in X}h_{l} \\
&\leq  (d(i)-d(j))h_{j}+\sum_{k\in N(i)\setminus N[j]}h_{k} \\
&\leq   \delta+k+\delta+k. \\
&= 2(\delta+k).
\end{align*}
Since $i \in Y_1$, we easily have
\begin{align*}
h_{i}-h_{j}<\frac{2(\delta+k)}{q(G)-(n-1)+1}
= \frac{4(\delta+k)}{2(q(G)-n+2)}
< \frac{(\delta+k)(\delta+4)+4}{2(q(G)-n+1)}.
\end{align*}

\textbf{Subcase 1.3}:
If $j\in Z_2$, we have $N(i)\setminus N[j]=\{k:k\in X\cup Y\cup Z $ and $ k\nsim j\}$,
and $N(j)\setminus N[i]=\emptyset$. Thus we have
$|N(i)\setminus N[j]|\leq \delta+k+ \left\lfloor \frac{(\delta+k)\delta}{4} \right\rfloor+1$.
Then, note that $d(i)-d(j)\leq \left\lfloor \frac{(\delta+k)\delta}{4} \right\rfloor+1+\delta+k$.
Therefore, we get
\begin{align*}
(q(G)-d(i)+1)(h_{i}-h_{j})
   &=(d(i)-d(j))x_{j}+\sum_{k\in N(i)\setminus N[j]}x_{k}-\sum_{l\in X}x_{l} \\
   &\leq  (d(i)-d(j))x_{j}+\sum_{k\in N(i)\setminus N[j]}x_{k} \\
   &\leq   \frac{(\delta+k)\delta}{2}+2(\delta+k)+2. \\
   &= \frac{(\delta+k)(\delta +4)}{2}+2.
\end{align*}
Since $i \in Y_1,$ we easily have
\begin{align*}
h_{i}-h_{j} \leq \frac{\frac{(\delta+k)(\delta+4)}{2}+2}{q(G)-(n-1)+1}
= \frac{(\delta+k)(\delta+4)+4}{2(q(G)-n+2)}
< \frac{(\delta+k)(\delta+4)+4}{2(q(G)-n+1)}.
\end{align*}

\textbf{Case 2}:
$Y_1=\emptyset$. Notice that $\max_{i\in V(G)}h_i$ is attained by vertices from $Z_1$.
And for each $i \in Z_1,d(i)=n-\delta+k-1$,
hence the vertex $i$ is adjacent to all other vertices in $Y\cup Z$.

\textbf{Subcase 2.1}:
If $j\in Z_2$, we have $N(i)\setminus N[j]=\{k:k\in Y\cup Z$ and $k\nsim j\}$.
Thus we get $d(i)-d(j)\leq \left\lfloor \frac{(\delta+k)\delta}{4} \right\rfloor+1$
and $|N(i)\setminus N[j]|\leq \left\lfloor \frac{(\delta+k)\delta}{4} \right\rfloor+1$.
Note that $N(j)\setminus \{i\}\subset N(i)\setminus \{j\}$, applying (4), we get
\begin{align*}
(q(G)-d(i)+1)(h_{i}-h_{j})
=(d(i)-d(j))h_{j}+\sum_{k\in N(i)\setminus N[j]}h_{k}
\leq  \frac{(\delta+k)\delta}{2} +2.
\end{align*}
Since $i\in Z_1,$ we easily have
\begin{align*}
h_{i}-h_{j}
\leq \frac{\frac{(\delta+k)\delta}{2}+2}{q(G)-(n-\delta-k-1)+1}
= \frac{(\delta+k)\delta +4}{2(q(G)-n+\delta+k+2)}
< \frac{(\delta+k)(\delta +4)+4}{2(q(G)-n+1)}.
\end{align*}

\textbf{Subcase 1.2}:
If $j\in Y_2$, we have $N(i)\setminus N[j]=\{k: k\in Y\cup Z $ and $ k\nsim j\}$,
and $N(j)\setminus N[i]=X$.
Thus we have $|N(i)\setminus N[j]|\leq \left\lfloor \frac{(\delta+k)\delta }{4} \right\rfloor+1$.
We also observe that $|d(i)-d(j)|\leq \left\lfloor \frac{(\delta+k)\delta}{4} \right\rfloor+\delta+k+1$.
Similarly, we get
\begin{align*}
(q(G)-d(i)+1)(h_{i}-h_{j})
&=(d(i)-d(j))h_{j}+\sum_{k\in N(i)\setminus N[j]}h_{k}-\sum_{l\in X}h_{l} \\
&\leq  (d(i)-d(j))x_{j}+\sum_{k\in N(i)\setminus N[j]}x_{k} \\
&\leq  \frac{(\delta+k)\delta}{2}+\delta+k+1+1. \\
&= \frac{(\delta+k)(\delta+2)}{2}+2.
\end{align*}
Since $i \in Z_1,$ we easily have
\begin{align*}
h_{i}-h_{j} \leq \frac{\frac{(\delta+k)(\delta +2)}{2}+2}{q(G)-(n-\delta-k-2)+1}
= \frac{(\delta+k)(\delta +2)+4}{2(q(G)-n+\delta+k+2)}
< \frac{(\delta+k)(\delta+4)+4}{2(q(G)-n+1)}.
\end{align*}
The proof is completed.
\end{proof}

In order to prove Theorem \ref{thm14},  we also need another lemma.

\begin{lemma}\label{lem37}
Let $G$ be a graph of order $n\geq F(k,\delta)$ and  minimum degree $\delta(G) \geq \delta \geq 1$.
For each graph $G\in \mathcal{B}_2(n,k,\delta)$, we have $q(G)< 2(n-\delta-k-1)$.
\end{lemma}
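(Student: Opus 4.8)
\medskip
\noindent\textbf{Proof strategy.}
It suffices to bound the $Q$-index of the extremal graph fixed just before Lemma~\ref{prop34}. Indeed, by monotonicity of $q(\cdot)$ under edge additions, every $G\in\mathcal{B}_2(n,k,\delta)$ satisfies $q(G)\le q(G_0)$, where $G_0$ is the member of $\mathcal{B}_2(n,k,\delta)$ of largest $Q$-index, and we may take $G_0=B(n,k,\delta)-E'$ with $E'\subset E_1$ and $|E'|=\lfloor(\delta+k)\delta/4\rfloor+1$. In $G_0$ the set $Y\cup Z$ induces $K_{n-\delta-k}$ with the edges of $E'$ deleted, every vertex of $Y$ is joined to all $\delta+k$ vertices of $X$, every vertex of $X$ has degree $\delta$, and there are no further edges. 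Let $h$ be the Perron vector of $G_0$, normalised so that $\max_{i\in V(G_0)}h_i=1$, and suppose for contradiction that $q:=q(G_0)\ge 2(n-\delta-k-1)$. Put $S_X=\sum_{i\in X}h_i$, $S_Y=\sum_{i\in Y}h_i$, $S_Z=\sum_{i\in Z}h_i$ and $S=S_Y+S_Z$, and set $\varepsilon=\frac{(\delta+k)(\delta+4)+4}{2(q-n+1)}$.

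The crux is a single identity obtained by summing equation~(\ref{eq1}) over all $i\in Y\cup Z$. For such $i$, write $\ell_i$ for the number of edges of $E'$ at $i$, so that $d(i)=n-1-\ell_i$ when $i\in Y$ and $d(i)=n-\delta-k-1-\ell_i$ when $i\in Z$, while $N(i)$ consists of all of $X$ (when $i\in Y$) together with $(Y\cup Z)\setminus\{i\}$ with the $\ell_i$ $E'$-neighbours of $i$ removed. Substituting this into (\ref{eq1}), summing over $i\in Y\cup Z$, and using $|Y|=\delta$, $|Z|=n-2\delta-k$ together with the double count $\sum_{i\in Y\cup Z}\ell_ih_i=\sum_{ij\in E'}(h_i+h_j)=\sum_{i\in Y\cup Z}\sum_{j:ij\in E'}h_j$, one finds that the two appearances of $\sum_{ij\in E'}(h_i+h_j)$ add up (rather than cancel) and all terms of order $n^2$ cancel exactly, yielding
\[
 (q-2n+\delta+k+2)\,S=\delta S_X-(\delta+k)S_Z-2\sum_{ij\in E'}(h_i+h_j).
\]

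Next I would invoke the contradiction hypothesis. Since $q\ge 2(n-\delta-k-1)$ we have $q-2n+\delta+k+2\ge-(\delta+k)$, so multiplying by $S>0$ and using $S_Y=S-S_Z$ in the above identity gives
\[
 \delta S_X+(\delta+k)S_Y\ \ge\ 2\sum_{ij\in E'}(h_i+h_j).
\]
The left-hand side is small: by Lemma~\ref{prop34}, $S_X\le(\delta+k)\delta/(q-\delta)$, and $h_i\le1$ with $|Y|=\delta$ give $S_Y\le\delta$, so it is at most $(\delta+k)\delta\bigl(1+\tfrac{\delta}{q-\delta}\bigr)$. The right-hand side is large: each edge of $E'\subset E_1$ has both endpoints in $Y\cup Z$, so by Lemma~\ref{prop36} (using $\max_{i\in V(G_0)}h_i=1$) every such $h_i+h_j$ exceeds $2(1-\varepsilon)$, whence, since $(\delta+k)\delta+1\le 4|E'|\le(\delta+k)\delta+4$, the right-hand side exceeds $(\delta+k)\delta+1-\varepsilon\bigl((\delta+k)\delta+4\bigr)$. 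The two estimates are incompatible as soon as
\[
 (\delta+k)\delta\Bigl(\tfrac{\delta}{q-\delta}+\varepsilon\Bigr)+4\varepsilon<1 ,
\]
and since $q\ge 2(n-\delta-k-1)$ forces $q-\delta\ge 2n-3\delta-2k-2$ and $q-n+1\ge n-2\delta-2k-1$, both $\tfrac{\delta}{q-\delta}$ and $\varepsilon$ are of order $1/n$; a routine check shows the hypothesis $n\ge(\delta^2+k\delta+7\delta+6k+9)(\delta^2+k\delta+1)$ comfortably forces the last inequality. This contradicts $q\ge 2(n-\delta-k-1)$, so $q(G_0)<2(n-\delta-k-1)$, and hence $q(G)<2(n-\delta-k-1)$ for every $G\in\mathcal{B}_2(n,k,\delta)$.

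The main obstacle I anticipate is the bookkeeping behind the displayed identity: one must verify that $E'$ enters both through the degrees $d(i)=n-1-\ell_i$ (resp.\ $n-\delta-k-1-\ell_i$) and through the deleted neighbours, that these two contributions \emph{reinforce} (producing the coefficient $-2$ on $\sum_{ij\in E'}(h_i+h_j)$), and that all $\Theta(n^2)$ terms cancel. This last cancellation is essential: estimating $h^{T}Q(G_0)h$ and $\|h\|^2$ separately in the Rayleigh quotient would discard an error of order $\varepsilon n^2=\Theta(n)$ and could never push the bound below $2(n-\delta-k-1)$. Once the identity is in hand, the remainder is a straightforward---if slightly fussy---inequality, and the decisive margin is exactly the single extra deleted edge, the ``$+1$'' in $|E'|=\lfloor(\delta+k)\delta/4\rfloor+1$, which is precisely what makes $2\sum_{ij\in E'}(h_i+h_j)\approx 4|E'|$ overtake $\delta S_X+(\delta+k)S_Y\approx(\delta+k)\delta$.
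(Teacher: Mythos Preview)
Your proof is correct and takes a genuinely different route from the paper. The paper compares the quadratic form $\langle Q(G_0)h,h\rangle$ directly with $\langle Q(\overline{K_{\delta+k}}\cup K_{n-\delta-k})h,h\rangle$: since $G_0$ differs from $\overline{K_{\delta+k}}\cup K_{n-\delta-k}$ by adding the $\delta(\delta+k)$ $X$--$Y$ edges and deleting $E'$, the difference is exactly $\sum_{i\in X,\,j\in Y}(h_i+h_j)^2-\sum_{ij\in E'}(h_i+h_j)^2$, and Lemmas~\ref{prop34} and~\ref{prop36} (with the lower bound $q(G_0)>2(n-\delta-k-1)-1$ supplied by Lemma~\ref{prop33}) make this negative, so Rayleigh yields $q(G_0)<q(\overline{K_{\delta+k}}\cup K_{n-\delta-k})=2(n-\delta-k-1)$ directly. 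You instead sum the eigenvalue equations over $Y\cup Z$ to obtain a \emph{linear} identity, assume $q\ge 2(n-\delta-k-1)$ for contradiction, and deduce $\delta S_X+(\delta+k)S_Y\ge 2\sum_{ij\in E'}(h_i+h_j)$, which the same Perron-vector estimates then refute. Your route is slightly more economical in that it does not invoke Lemma~\ref{prop33} (the contradiction hypothesis itself furnishes the needed lower bound on $q$), and it works with first-order sums rather than squares; the paper's route is a direct proof rather than by contradiction and makes the role of the reference graph $\overline{K_{\delta+k}}\cup K_{n-\delta-k}$ explicit. Your closing remark that a Rayleigh-quotient argument ``would discard an error of order $\varepsilon n^2$'' applies only to the naive strategy of bounding numerator and denominator separately; the paper's comparison against a graph with \emph{exactly} the target $Q$-index sidesteps that issue.
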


\begin{proof}
We first assume $G\in  \mathcal{B}_2(n,k,\delta)$ such that $G$ has the largest signless Laplacian spectral radius and $G[Y]$ contains the largest number of edges.
Let $q(G)$ be the largest eigenvalue of $Q(G)$ and $h$ be the eigenvector corresponding to $q(G)$.
 Lemmas \ref{prop34} and \ref{prop36} together with
$|E'|=\left\lfloor \frac{\delta (\delta +k)}{4} \right\rfloor
\ge \frac{\delta (\delta +k)+1}{4}$ imply that
\begin{align*}
    & \langle Q(G)h,h\rangle-\langle Q(\overline{K_{\delta+k}}\cup K_{n-\delta-k})h,h\rangle \\
    &= \sum_{i\in X,j\in Y}(h_i+h_j)^2-\sum_{{i,j}\in E'}(h_i+h_j)^2 \\
    &\leq |X||Y|\left(\max_{i\in X}h_i +\max_{j\in Y}h_j\right)^2
          -|E'|\left(2\min_{j\in Y\cup Z}h_j\right)^2 \\
    &\leq \delta(k+\delta)\left(1+\frac{\delta}{q(G)-\delta}\right)^2
        -[\delta(\delta+k)+1]\left(1-\frac{(\delta+k)(\delta+4)+4}{2(q(G)-n+1)}\right)^2.
\end{align*}
Since $q(G)>2(n-\delta-k-1)-1$ by Lemma \ref{prop33},
then we can choose for our purpose that
$n\geq (\delta^2 +k\delta +7\delta +6k +9)(\delta^2 +k\delta +1)$,
to make sure the following holds.
\[  \langle Q(G)h,h\rangle-\langle Q(\overline{K_{\delta+k}}\cup K_{n-\delta-k})h,h\rangle <0.\]
According to Rayleigh's principle,
\[ \frac{\langle Q(\overline{K_{\delta+k}}\cup K_{n-\delta-k})h,h\rangle}{\langle h,h\rangle}
\leq q(\overline{K_{\delta+k}}\cup K_{n-\delta-k})=2(n-\delta-k-1).\]
Therefore, we have $q(G)=\frac{\langle Q(G)h,h\rangle}{\langle h,h\rangle}<2(n-\delta-k-1)$.
\end{proof}

Now we are ready to prove Theorem \ref{thm14}.

\begin{proof}
By Lemma \ref{lem22}, we have
\[ 2(n-\delta-k-1)\leq q(G) \leq \frac{2e(G)}{n-1}+n-2. \]
We obtain
\begin{align*}
    e(G)& \geq  \frac{(n-2\delta-2k)(n-1)}{2} \\
    &= \frac{1}{2}(n-k-\delta-1)(n-k-\delta-2)+(\delta +1)(k+\delta+1) \\
    &\quad +  n-\frac{1}{2}(k+\delta +1)(3\delta +k+2)-1 \\
    &> \frac{1}{2}(n-k-\delta-1)(n-k-\delta-2)+(\delta +1)(k+\delta+1),
    \end{align*}
the last inequality holds as $n\geq \frac{1}{2}(k+\delta +1)(3\delta +k+2)+1$.
By Lemma \ref{lem31}, $G$ is $k$-path coverable unless $G \subseteq {B}(n,k,\delta)$.
Together with Lemma \ref{lem32} and Lemma \ref{lem37}, the result follows.
\end{proof}

\frenchspacing

\end{document}